\DeclareMathOperator{\re}{\mathbb{R}e}
\DeclareMathOperator{\im}{\mathbb{I}m}
\newcommand{\gm}{{\Gamma_-}}
\newcommand{\gp}{{\Gamma_+}}
\newcommand{\INF}{{\infty}}
\newcommand{\eps}{\epsilon}
\newcommand{\tta}{\theta}
\newcommand{\OM}{\Omega}
\newcommand{\sph}{{{\mathbf S}^ 1}}
\newcommand{\del}{\partial}
\newcommand{\Gam}{\varGamma}
\newcommand{\ol}{\overline}
\newcommand{\ds}{\displaystyle}
\newcommand{\BR}{\mathbb{R}}
\newcommand{\fii}{{\varphi}}
\newcommand{\bu}{{\bf u}}
\newcommand{\bv}{{\bf v}}
\newcommand{\bg}{{\bf g}}
\newcommand{\bF}{{\bf F}}
\newcommand{\bI}{{\bf I}}
\newcommand{\B}{\mathcal{B}}
\newcommand{\HT}{\mathcal{H}}
\newtheorem{theorem}{Theorem}[section]
\newtheorem{prop}{Proposition}[section]
\newtheorem{lemma}{Lemma}[section]
\newtheorem{cor}{Corollary}[section]
\newtheorem{definition}{Definition}[section]
\title[On the $X$-ray transform of symmetric 2-tensors]{On the $X$-ray transform of planar symmetric 2-tensors}
\begin{document}
\date{\today}
\author{Kamran Sadiq}
\address{Johann Radon Institute of Computational and Applied Mathematics (RICAM), Altenbergerstrasse 69, 4040 Linz, Austria}
\email{kamran.sadiq@oeaw.ac.at}

\author{Otmar Scherzer}
\address{Computational Science Center, Oskar-Morgenstern-Platz 1, 1090 Vienna \& Johann Radon Institute of Computational and Applied Mathematics (RICAM), Altenbergerstrasse 69, 4040 Linz, Austria}
\email{otmar.scherzer@univie.ac.at}

\author{Alexandru Tamasan}
\address{Department of Mathematics, University of Central Florida, Orlando, 32816 Florida, USA}
\email{tamasan@math.ucf.edu}
\subjclass[2000]{Primary 30E20; Secondary 35J56}

\keywords{ $X$-ray Transform of symmetric tensors, Attenuated $X$-ray Transform, $A$-analytic maps, Hilbert Transform,  boundary rigidity problem}
\maketitle

\begin{abstract}
In this paper we study the attenuated $X$-ray transform of 2-tensors 
supported in strictly convex bounded subsets in the Euclidean plane. We characterize its range
and reconstruct all possible 2-tensors yielding identical $X$-ray data. 
The characterization is in terms of a Hilbert-transform associated with $A$-analytic maps
in the sense of Bukhgeim.
\end{abstract}

\section{Introduction} \label{S:intro}
This paper concerns the range characterization of the attenuated $X$-ray transform of symmetric 2-tensors in the plane. 
Range characterization of the non-attenuated $X$-ray transform of functions (0-tensors) in the
Euclidean space has been long known \cite{gelfandGraev, helgason,ludwig}, whereas in the case of a constant attenuation some range conditions can be inferred from \cite{kuchmentLvin,aguilarKuchment,aguilarEhrenpreisKuchment}. For a varying attenuation the two dimensional case has been particularly interesting with inversion formulas requiring new analytical tools: the theory of $A$-analytic maps originally employed in \cite{ABK}, and ideas from inverse scattering  in \cite{novikov01}. Constraints on the range for the two dimensional $X$-ray transform of functions were given in \cite{novikov02,bal}, and a range characterization based on Bukhgeim's theory of $A$-analytic maps was given in \cite{sadiqtamasan01}.

Inversion of the $X$-ray transform of higher order tensors has been formulated 
directly in the setting of Riemmanian manifolds with boundary \cite{vladimirBook}. 
The case of 2-tensors appears in the linearization of the boundary rigidity
problem. It is easy to see that injectivity can hold only in some restricted 
class: e.g., the class of solenoidal tensors.
For two dimensional simple manifolds with boundary, injectivity with in the solenoidal tensor fields
has been establish fairly recent: in the non-attenuated case for 0- and 1-tensors we mention 
the breakthrough result in \cite{pestovUhlmann04}, and in the attenuated case in \cite{saloUhlmann11};
see also \cite{holmanStefanov} for a more general weighted transform. 
Inversion for the attenuated $X$-ray transform for solenoidal tensors of rank two and higher can be 
found in \cite{paternainSaloUhlmann13}, with a range characterization in \cite{paternainSaloUhlmann14}.
In the Euclidean case we mention an earlier inversion of the attenuated $X$-ray transform of solenoidal 
tensors in \cite{kazantsevBukhgeimJr06}; however this work does not address range characterization.

Different from the recent characterization in terms of the scattering relation in \cite{paternainSaloUhlmann14}, in this paper the range conditions are in terms of the Hilbert-transform for $A$-analytic
maps introduced in \cite{sadiqtamasan01, sadiqtamasan02}. 
Our characterization can be understood as an explicit description of the scattering relation in 
\cite{paternainSaloUhlmann13-1,paternainSaloUhlmann13, paternainSaloUhlmann14} particularized to the Euclidean setting. In the sufficiency part we reconstruct all possible 2-tensors yielding  identical $X$-ray data; see \eqref{RT_PsiClass} for the non-attenuated case and  \eqref{PsiClass} for the attenuated case.

For a real symmetric 2-tensor  $\bF\in L^1(\BR^2;\BR^{2\times2})$,  
\begin{align}\label{defn_F}
\bF(x)=\begin{pmatrix} f_{11}(x) & f_{12}(x) \\ f_{12}(x) & f_{22}(x) \end{pmatrix}, \quad x \in \BR^2,
\end{align}
and a real valued function $a\in L^1(\BR^2)$,  the \emph{$a$-attenuated $X$-ray transform  of $\bF$} 
is defined by
\begin{align}\label{XaF}
X_{a}\bF(x,\tta) := \int_{-\infty}^{\infty} \langle \bF(x+t\tta)\, \tta , \tta \rangle \exp\left\{{-\int_t^\infty a(x+s\tta)ds}\right\}dt,
\end{align}
where $\tta$ is a direction in the unit sphere $\sph$, and $\langle\cdot,\cdot \rangle$ is the scalar product in $\BR^2$. For the non attenuated case $a\equiv 0$ we use the notation $X\bF$.

In this paper, we consider $\bF$ be defined on a strictly convex bounded set $\Omega \subset \BR^2$ with vanishing trace at the boundary $\Gamma$; further regularity and the order of vanishing will be specified in the theorems. In the attenuated case we assume $a > 0$ in $\ol{\OM}$.

For any $(x,\theta)\in\ol\OM\times \sph$ let $\tau(x,\theta)$ 
be  length of the chord in the direction of $\theta$ passing through $x$. 
Let  also consider the incoming $(-)$, respectively outgoing $(+)$ submanifolds of the unit bundle restricted to the boundary
\begin{align}\label{GammaPM}
\Gamma_\pm :=\{(x,\tta)\in \Gamma\times \sph: \pm\tta\cdot n(x)>0\},
\end{align} and the variety 
\begin{align}\label{GammaZero}
\Gamma_0 :=\{(x,\tta)\in \Gamma\times \sph: \tta\cdot n(x)=0\},
\end{align} where $n(x)$ denotes outer normal.

\begin{figure}[ht]
\centering
\begin{tikzpicture}[scale=1.7,cap=round,>=latex]

\tikzset{
    thick/.style=      {line width=0.8pt},
    very thick/.style= {line width=1.1pt},
    ultra thick/.style={line width=1.6pt}
}

\coordinate[label=above:$\OM$] (OM) at(0:0.1cm);

 \draw[->] (45:1.2cm) -- (45:1.6cm);
  \coordinate[label=above:$\theta$] (theta1) at (45:1.2cm); 
 
  \filldraw[black] (0:1cm) circle(1.2pt);
  	\draw[->] (0:1cm) -- (0:1.6cm);
  	\coordinate[label=left:$n$] (nu1) at (0:1.9cm);

 \filldraw[black] (70:1cm) circle(1.2pt);
  	\draw[->] (70:1cm) -- (70:1.6cm);
    \coordinate[label=above:$n$] (nu2) at (70:1.5cm);

  \coordinate[label=right:$\gp$] (gp) at (25:1.01cm);   
  	\draw[ultra thick] (135:0.8cm) -- (135:1.2cm);
  	\draw[thick,draw={red}](315:1cm)  arc (315:135:1.01cm);

   \draw[->] (225:1.6cm) -- (225:1.2cm);
  \coordinate[label=left:$\theta$] (theta2) at (225:1.2cm); 
 
  \filldraw[black] (170:1cm) circle(1.2pt);
  	\draw[->] (170:1cm) -- (170:1.6cm);
 	\coordinate[label=left:$n$] (nu3) at (170:1.5cm);

  \filldraw[black] (250:1cm) circle(1.2pt); 
  	\draw[->] (250:1cm) -- (250:1.6cm);
 	\coordinate[label=below:$n$] (nu4) at (250:1.6cm);

  \coordinate[label=left:$\gm$] (gm) at at (210:1.01cm);
  	\draw[ultra thick] (315:0.8cm) -- (315:1.2cm); 
  	\draw[ultra thick, draw={blue}](315:1cm)  arc (-45:135:1.01cm); 
    
\end{tikzpicture}
\caption{Definition of $\Gam_\pm$ } \label{fig:1}
\end{figure}
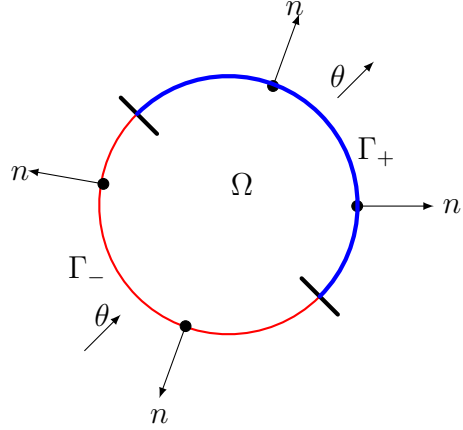

The $a$-attenuated $X$-ray transform of $\bF$ 
is realized as a function on $\Gam_+$ by
\begin{align}\label{ART2T}
X_{a}\bF(x,\tta) = \int_{-\tau(x,\tta)}^{0} \langle \bF(x+t\tta)\, \tta , \tta \rangle \,
e^{-\int_t^0 a(x+s\tta)ds} \;dt, \; (x,\tta) \in \Gam_{+}.
\end{align} 
 
We approach the range characterization through its connection with the transport model as follows: 
The boundary value problem
\begin{align} \label{TransportEq2Tensor}
&  \tta\cdot\nabla u(x,\tta) +a(x) u(x,\theta) = \langle  \bF(x) \tta , \tta \rangle  \quad  (x,\tta)\in \OM \times \sph, \\ \label{UGamMinus}
&   u|_{\Gam_-}= 0 
\end{align}
has a unique solution in $\OM \times \sph$ and
\begin{align}\label{u_Gam+}
u \lvert_{\Gam_{+}} (x,\tta) = X_a \bF (x,\tta), \quad (x,\tta)\in \Gam_{+}.
\end{align}

The $X$-ray transform of 2-tensors occurs in the linearization of the boundary rigidity
problem \cite{vladimirBook}:
For $\eps>0$ small, let 
\begin{align*}
g^\eps(x):=\bI+\eps \bF(x)+ o(\eps), \;x \in \OM,
\end{align*} be a family of metrics perturbations from the Euclidean, 
where $\bI$ is the identity matrix and $\bF$ is as in \eqref{defn_F}. 
For an arbitrary pair of boundary points $x,y\in\Gam$  let 
$d_{\eps}(x,y)$ denote their distance in the metric $g^\eps$.  The boundary rigidity problem 
asks for the recovery of the metric $g^\eps$ from knowledge of $d_\eps(x,y)$ for all $x,y\in\Gam$. 
In the linearized case one seeks to recover $\bF(x)$ from 
$\left.\frac{d}{d\eps}\right|_{\eps=0}d^2_\eps(x,y).$
Taking into account the length minimizing property of geodesic one can show that 
\begin{align*}
\frac{1}{|x-y|}\left.\frac{d}{d\eps}\right|_{\eps=0}d^2_\eps(x,y)
=  \int_{-|x-y|}^0\langle \bF(x+t\tta)\tta,\tta\rangle dt = X\bF(x,\tta),
\end{align*} where $\tta:=\ds \frac{x-y}{|x-y|}\in\sph.$

\section{Preliminaries}
In this section we briefly introduce the properties of 
Bukhgeim's $A$-analytic maps \cite{bukhgeim_book}  needed later. 

For $z=x_1+ix_2$, we consider the Cauchy-Riemann operators
\begin{align} \label{CauchyRiemannOp}
\ol{\del} = \left( \del_{x_{1}}+i \del_{x_{2}} \right) /2 ,\quad \del = \left( \del_{x_{1}}- i \del_{x_{2}} \right) /2.
\end{align} Let $l_\INF (,l_1)$ be the space of bounded (, respectively summable) sequences,
$\mathcal{L}:l_\infty\to l_\infty$ be the left shift
\begin{align*}
\mathcal{L}  \langle u_{-1}, u_{-2}, ... \rangle =  \langle u_{-2}, u_{-3}, u_{-4}, ... \rangle.
\end{align*}
\begin{definition}
A sequence valued map
$$z\mapsto  \bu(z): = \langle u_{-1}(z),u_{-2}(z),u_{-3}(z),... \rangle$$
is called {\em $\mathcal{L}$-analytic}, if $\bu\in C(\ol\OM;l_\INF)\cap C^1(\OM;l_\INF)$ and
\begin{equation}\label{Aanalytic}
\ol{\del} \bu (z) + \mathcal{L} \del \bu (z) = 0,\quad z\in\OM.
\end{equation}
\end{definition}

For $0<\alpha<1$ and $k=1,2$, we recall the Banach spaces in \cite{sadiqtamasan01}:
\begin{equation} \label{lGamdefn}
 l^{1,k}_{\INF}(\Gam): = \left \{ \bu=  \langle u_{-1}, u_{-2}, ... \rangle  : \sup_{\zeta\in \Gam}\sum_{j=1}^{\INF}  j^{k} \lvert u_{-j}(\zeta) \rvert < \INF \right \},
\end{equation}
\begin{equation} \label{CepsGamdefn}
 C^{\alpha}(\Gam ; l_1) := \left \{ \bu:
\sup_{\xi\in \Gam} \lVert \bu(\xi)\rVert_{\ds l_{1}} + \underset{{\substack{
            \xi,\eta \in \Gam \\
            \xi\neq \eta } }}{\sup}
 \frac{\lVert \bu(\xi) - \bu(\eta)\rVert_{\ds l_{1}}}{|\xi - \eta|^{ \alpha}} < \INF \right \}.
\end{equation}
By replacing $\Gam$ with $\overline\OM$ and $l_{1}$ with $l_{\INF}$ in \eqref{CepsGamdefn} we similarly define  $C^{\alpha}(\overline\OM ; l_1)$, respectively, $C^{\alpha}(\ol\OM ; l_{\INF})$.

At the heart of the theory of $A$-analytic maps lies a Cauchy-like integral formula introduced by Bukhgeim in \cite{bukhgeim_book}. The explicit variant \eqref{BukhgeimCauchyFormula} appeared first in Finch \cite{finch}.
The formula  below is restated in terms of $\mathcal{L}$-analytic maps as in \cite{sadiqtamasan02}.

\begin{theorem}\cite[Theorem 2.1]{sadiqtamasan02}\label{BukhgeimCauchyThm}
For some $\bg=\langle g_{-1}, g_{-2},g_{-3},...\rangle  \in l^{1,1}_{\INF}(\Gam)\cap C^\alpha(\Gam;l_1)$ define the Bukhgeim-Cauchy operator $\B $ acting on $\bg$, $$\OM\ni z\mapsto \langle (\B \bg)_{-1}(z) ,(\B \bg)_{-2}(z),(\B \bg)_{-3}(z),...\rangle,$$ by
\begin{align}
(\B \bg)_{-n}(z) &:= \frac{1}{2\pi i}\sum_{j=0}^{\infty} \int_{\Gam}
\frac{ g_{-n-j}(\zeta)\ol{(\zeta-z)}^{j}}{(\zeta-z)^{j+1}}d\zeta
\nonumber \\
 \label{BukhgeimCauchyFormula}
 & \qquad - \frac{1}{2\pi i}\sum_{j=1}^{\INF} \int_{\Gam}
\frac{ g_{-n-j}(\zeta)\ol{(\zeta-z)}^{j-1}}{(\zeta-z)^{j}}d\ol{\zeta}, \; n=1,2,3,...
\end{align}
Then $\B \bg\in C^{1,\alpha}(\OM;l_\infty)\cap C(\ol \OM;l_\infty)$ and it is also $\mathcal{L}$-analytic.
\end{theorem}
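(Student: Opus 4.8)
The plan is to establish three facts in turn: that the series defining each component $(\B\bg)_{-n}$ converges and produces a bounded sequence, so that $\B\bg$ takes values in $l_\INF$; that $\B\bg$ solves the $\LL$-analyticity system; and that $\B\bg$ extends continuously to $\ol\OM$. The two hypotheses on $\bg$ play distinct roles: the weighted summability $\bg\in l^{1,1}_\INF(\Gam)$ controls the tails of the series and of its termwise derivatives, while the H\"older regularity $\bg\in C^\alpha(\Gam;l_1)$ governs the boundary limit. For the first point, fix $z\in\OM$ and set $d:=\dist(z,\Gam)>0$. Each kernel in \eqref{BukhgeimCauchyFormula} has modulus $\bigl|\ol{(\zeta-z)}^{\,j}/(\zeta-z)^{j+1}\bigr|=|\zeta-z|^{-1}\le d^{-1}$, so
\[
|(\B\bg)_{-n}(z)|\le \frac{C}{d}\sum_{m\ge n}\|g_{-m}\|_{L^1(\Gam)},
\]
and since $\sum_m\|g_{-m}\|_{L^1(\Gam)}\le|\Gam|\,\sup_{\zeta}\sum_m|g_{-m}(\zeta)|<\INF$ by the $l^{1,1}_\INF(\Gam)$ bound, the right-hand side is finite and uniformly bounded in $n$. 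Hence the series converges absolutely and locally uniformly in $\OM$, and $\B\bg(z)\in l_\INF$.

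For the $\LL$-analyticity I would differentiate termwise, writing the first ($d\zeta$) sum in \eqref{BukhgeimCauchyFormula} as $A_{-n}(z)$ and the second ($d\ol\zeta$) sum as $B_{-n}(z)$, so that $(\B\bg)_{-n}=A_{-n}-B_{-n}$. Termwise differentiation is legitimate because applying $\del$ or $\dba$ to the $j$th kernel produces a factor $O(j)$ times a kernel of modulus $|\zeta-z|^{-2}\le d^{-2}$; the differentiated series is then dominated by $C d^{-2}\sum_m m\,\|g_{-m}\|_{L^1(\Gam)}$, finite precisely because of the first-order weight $j^1$ in $l^{1,1}_\INF(\Gam)$. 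This is exactly where that weight is needed. Using
\[
\dba\frac{\ol{(\zeta-z)}^{\,j}}{(\zeta-z)^{j+1}}=-\,j\,\frac{\ol{(\zeta-z)}^{\,j-1}}{(\zeta-z)^{j+1}},
\qquad
\del\frac{\ol{(\zeta-z)}^{\,j}}{(\zeta-z)^{j+1}}=(j+1)\,\frac{\ol{(\zeta-z)}^{\,j}}{(\zeta-z)^{j+2}},
\]
together with the analogous identities for the $d\ol\zeta$ kernels, one reindexes $\dba(\B\bg)_{-n}$ and $\del(\B\bg)_{-n-1}$ onto a common summation index $m$. The $d\zeta$ contributions of $\dba A_{-n}$ and $\del A_{-n-1}$ then carry equal kernels with opposite signs and cancel term by term; the same cancellation occurs between the $d\ol\zeta$ contributions of $-\dba B_{-n}$ and $-\del B_{-n-1}$. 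This yields $\dba(\B\bg)_{-n}+\del(\B\bg)_{-n-1}=0$ for every $n$, which is exactly the componentwise form of \eqref{Aanalytic}. Interior $C^{1,\alpha}$ regularity follows from the same locally uniform convergence of the differentiated series together with the smooth dependence of each kernel on $z$ when $z$ stays bounded away from $\Gam$.

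The principal difficulty is the continuity of $\B\bg$ up to $\Gam$, where every kernel degenerates to a Cauchy-type singularity of order $|\zeta-z|^{-1}$. Here I would treat each term as a generalized Cauchy integral and invoke a Plemelj--Sokhotski type argument: the H\"older hypothesis $\bg\in C^\alpha(\Gam;l_1)$ permits subtracting the boundary value $g_{-m}(z_0)$ at the point $z_0\in\Gam$ toward which $z$ tends, splitting each integral into a weakly singular piece that is continuous across $\Gam$ and a residual piece controlled by the winding of the kernel. The feature beyond the classical scalar Cauchy integral is that these limits must be estimated \emph{uniformly in $n$} and then summed, for which the $l_1$-valued H\"older seminorm and the weighted summability are combined. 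Establishing this uniform boundary limit---rather than the interior analyticity, which is essentially algebraic---is the step I expect to demand the most care.
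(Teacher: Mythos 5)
This theorem is quoted verbatim from \cite[Theorem 2.1]{sadiqtamasan02}; the present paper states it without proof, so there is no internal argument to compare yours against, only the cited source. Your outline follows what is essentially the standard (and, as far as I can tell, the cited) route: every kernel in \eqref{BukhgeimCauchyFormula} has modulus exactly $|\zeta-z|^{-1}$ independently of $j$, so $l^{1,1}_{\INF}(\Gam)$ gives absolute, locally uniform convergence; differentiation costs a factor $O(j)$, absorbed by the weight in $l^{1,1}_{\INF}(\Gam)$; and after reindexing, the $d\zeta$ and $d\ol{\zeta}$ contributions cancel in pairs to give $\ol{\del}(\B\bg)_{-n}+\del(\B\bg)_{-n-1}=0$, which is \eqref{Aanalytic} componentwise. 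The derivative identities and the cancellation bookkeeping you write out are correct. Two caveats on completeness: the continuity up to $\ol\OM$ is only sketched --- you name the right tool (a Plemelj--Sokhotski splitting using $\bg\in C^\alpha(\Gam;l_1)$, carried out uniformly in $n$ and summed over $j$) but do not execute it, and for $j\geq 1$ the ``constant'' integrals $\int_\Gam \ol{(\zeta-z)}^{\,j}(\zeta-z)^{-j-1}d\zeta$ are not evaluated by the residue theorem, so the residual piece needs genuine work. Also, the interior $C^{1,\alpha}$ claim is asserted too quickly: interpolating your sup and Lipschitz bounds on the differentiated $j$-th term costs a factor $j^{1+\alpha}$, and $\sum_j j^{1+\alpha}|g_{-j}|$ is not controlled by the $l^{1,1}_{\INF}(\Gam)$ norm, so the H\"older seminorm of the first derivatives cannot be obtained termwise from crude kernel bounds; it requires the same Privalov-type estimates on Cauchy integrals of H\"older densities that drive the boundary step.
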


For our purposes further regularity in $\B\bg$ will be required. Such smoothness is obtained by increasing the assumptions on the rate of decay of the terms in $\bg$ as explicit below.
For $0<\alpha<1$, let us recall the Banach space $Y_\alpha$ in \cite{sadiqtamasan01}:
\begin{equation} \label{YGamdefn}
 Y_{\alpha} = \left \{ \bg\in l^{1,2}_{\INF}(\Gamma) :  \underset{{\substack{
            \xi, \mu  \in \Gam \\
            \xi \neq \mu } }}{\sup} \sum_{j=1}^{\INF} j\frac{\lvert g_{-j}(\xi) - g_{-j}(\mu) \rvert}{|\xi - \mu|^\alpha} < \INF \right \}.
\end{equation}

\begin{prop}\cite[Proposition 2.1]{sadiqtamasan02}\label{extraRegularity}
If $\bg\in Y_{\alpha}$, $\alpha>1/2$, then 
\begin{align}
\B \bg\in C^{1,\alpha}(\OM;l_1)\cap C^{\alpha}(\ol \OM;l_1)\cap C^2(\OM;l_\infty).
\end{align}
\end{prop}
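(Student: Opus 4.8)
The plan is to bootstrap from the regularity already supplied by Theorem~\ref{BukhgeimCauchyThm}, which under the weaker hypotheses yields $\B\bg\in C^{1,\alpha}(\OM;l_\infty)\cap C(\ol\OM;l_\infty)$ together with $\mathcal{L}$-analyticity, and to upgrade this in two directions: improving the summability index from $l_\infty$ to $l_1$, and gaining a second interior derivative. Both gains are paid for by the two features built into $Y_\alpha$ beyond $C^\alpha(\Gam;l_1)$, namely the faster decay encoded in $l^{1,2}_\INF(\Gam)$ and the $j$-weighted modulus of continuity $\sum_j j\,|g_{-j}(\xi)-g_{-j}(\mu)|\le C|\xi-\mu|^\alpha$. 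The organizing observation is that the Bukhgeim kernels are unimodular in the index, $\left|\ol{(\zeta-z)}^{\,j}/(\zeta-z)^{j+1}\right|=|\zeta-z|^{-1}$ for every $j$, so all convergence in $j$ must be produced by the decay of $\bg$, while each derivative $\del$ or $\dba$ applied to a kernel contributes an extra algebraic factor growing linearly in $j$.

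First I would establish the $l_1$-valued bound on $\ol\OM$. Majorizing every kernel in \eqref{BukhgeimCauchyFormula} by $C|\zeta-z|^{-1}$ and forming $\sum_{n\ge1}|(\B\bg)_{-n}(z)|$, the resulting double sum over the pairs $(n,j)$ is reindexed through $m=n+j$; since exactly $m$ admissible pairs contribute to each value of $m$, it collapses to a constant multiple of $\sum_{m\ge1} m\,\sup_{\Gam}|g_{-m}|$, finite by the $l^{1,2}_\INF(\Gam)$ membership. The same reindexing applied after one $z$-derivative inserts a weight $j$ into the summand and so yields the bound $\sum_m m^2\sup_\Gam|g_{-m}|$, which, combined with the interior Hölder estimate described below, gives the first-order part of $C^{1,\alpha}(\OM;l_1)$. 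For the interior $C^2$ claim one differentiates \eqref{BukhgeimCauchyFormula} twice: on a compact subset of $\OM$ one has $|\zeta-z|\ge\dist(z,\Gam)>0$, so the kernel singularity is inert, two derivatives contribute a factor of order $j^2$, and for each fixed $n$ the differentiated series converges uniformly because $\sup_n\sum_{j}j^2|g_{-n-j}|\le\sum_m m^2\sup_\Gam|g_{-m}|<\INF$; this is an $l_\infty$-valued statement and requires no condition on $\alpha$. Equivalently, one may differentiate through the $\mathcal{L}$-analytic relation $\dba u_{-n}=-\del u_{-n-1}$ to reduce the higher derivatives to first derivatives of shifted components already controlled by Theorem~\ref{BukhgeimCauchyThm}.

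The main obstacle is the Hölder continuity up to the boundary in the $l_1$ norm, that is, $\|\B\bg(z_1)-\B\bg(z_2)\|_{l_1}\le C|z_1-z_2|^\alpha$ for $z_1,z_2\in\ol\OM$, because here the boundary singularity of the Cauchy kernel and the summation over the sequence index interact. Writing each kernel difference as $(z_1-z_2)$ times a product of two singular factors, I would use the Plemelj--Privalov device of subtracting the value of the density at the foot point $\zeta_0\in\Gam$ nearest to $z_1,z_2$: the regularized integrand is dominated using $|g_{-n-j}(\zeta)-g_{-n-j}(\zeta_0)|\le C_{n+j}|\zeta-\zeta_0|^\alpha$, and the constants $C_{n+j}$ are summed against the weight $j$ precisely by the $Y_\alpha$ condition. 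Splitting $\Gam$ into a neighborhood of $\zeta_0$ and its complement, the near piece reduces to a singular integral of the type $\int|\zeta-\zeta_0|^{\alpha}|\zeta-z|^{-2}\,|d\zeta|$; the hypothesis $\alpha>1/2$ enters here, guaranteeing that the $\alpha$-dependent bounds produced by these near-field integrals remain summable against the index weight. Collecting the three estimates gives $\B\bg\in C^{1,\alpha}(\OM;l_1)\cap C^{\alpha}(\ol\OM;l_1)\cap C^2(\OM;l_\infty)$.
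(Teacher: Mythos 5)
First, a caveat: the paper does not prove this proposition; it is quoted verbatim from \cite[Proposition 2.1]{sadiqtamasan02}, so there is no in-text argument to compare yours against. Judged on its own terms, your sketch has the right general shape (exploit the $j$-weighted decay in $Y_\alpha$, reindex the double sum over $(n,j)$, regularize the boundary singularity \`a la Plemelj--Privalov), and the interior conclusions $C^{1,\alpha}(\OM;l_1)$ and $C^2(\OM;l_\infty)$ are essentially fine, since on compact subsets of $\OM$ the kernels are smooth and each $z$-derivative costs only a factor $O(j)$, which $l^{1,2}_\INF(\Gam)$ absorbs. (One side remark is wrong, though: the relation $\dba u_{-n}=-\del u_{-n-1}$ converts $\dba$-derivatives into $\del$-derivatives of \emph{shifted} components, so a second derivative reduces to $\del^2$ of some component, not to a first derivative already controlled by Theorem~\ref{BukhgeimCauchyThm}; you cannot avoid differentiating the kernel twice.)

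The genuine gaps are in the estimates up to the boundary, and both stem from treating every kernel in \eqref{BukhgeimCauchyFormula} as uniformly of size $|\zeta-z|^{-1}$. For the uniform $l_1$ bound on $\ol\OM$, the majorization by $C|\zeta-z|^{-1}$ does not close: $\int_\Gam|\zeta-z|^{-1}|d\zeta|$ diverges logarithmically as $\dist(z,\Gam)\to0$, so the claimed bound $\sum_m m\sup_\Gam|g_{-m}|$ is not obtained this way; boundedness near $\Gam$ already requires either the subtraction device or the cancellation discussed next. More seriously, in the $C^\alpha(\ol\OM;l_1)$ estimate your bookkeeping of index weights does not match what $Y_\alpha$ provides. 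Differencing the $j$-th kernel between $z_1$ and $z_2$ costs a factor of order $j$, so after reindexing $m=n+j$ your scheme needs $\sum_m m^2$ times the H\"older seminorms of $g_{-m}$, whereas \eqref{YGamdefn} controls only $\sum_m m$ against the seminorms (the $m^2$ weight is available only for the sup norms). The assertion that ``$\alpha>1/2$ guarantees summability against the index weight'' is exactly the point that needs proof, and the naive estimate you set up does not deliver it. What rescues the argument -- and what your framing misses -- is that for $j\ge1$ the two sums in \eqref{BukhgeimCauchyFormula} pair into the combination $\bigl(\frac{d\zeta}{\zeta-z}-\frac{d\ol\zeta}{\ol{\zeta-z}}\bigr)\bigl(\frac{\ol{\zeta-z}}{\zeta-z}\bigr)^{j}$ appearing in \eqref{hilbertT}; on a $C^2$ strictly convex curve the measure $\frac{d\zeta}{\zeta-z}-\frac{d\ol\zeta}{\ol{\zeta-z}}$ is essentially a bounded-plus-Poisson kernel, uniformly for $z\in\ol\OM$, so the only genuinely Cauchy-singular term is $j=0$. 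Without isolating this cancellation (or an interpolation between the bounds $O(|\zeta-z|^{-1})$ and $O(j|z_1-z_2||\zeta-z|^{-2})$ in which $\alpha>1/2$ enters explicitly), the boundary $l_1$-H\"older estimate -- the heart of the proposition -- is not established.
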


The Hilbert transform associated with boundary of $\mathcal{L}$-analytic maps is defined below.
\begin{definition}
For $\bg=\langle g_{-1},g_{-2},g_{-3},...\rangle\in l^{1,1}_{\INF}(\Gam)\cap C^\alpha(\Gam;l_1)$, we define the Hilbert transform $\HT\bg$  componentwise for $n \geq 1$ by
\begin{align}\label{hilbertT}
(\HT\bg)_{-n}(\xi)&=\frac{1}{\pi} \int_{\Gam } \frac{g_{-n}(\zeta)}{\zeta - \xi} d\zeta\nonumber\\
&+\frac{1}{\pi} \int_{\Gam } \left \{ \frac{d\zeta}{\zeta-\xi}-\frac{d \ol{\zeta}}{\ol{\zeta-\xi}} \right \} \sum_{j=1}^{\infty}  g_{-n-j}(\zeta)
\left( \frac{\ol{\zeta-\xi}}{\zeta-\xi} \right) ^{j},\; \xi\in\Gam.
\end{align}
\end{definition}

The following result justifies the name of the transform $\HT$. For its proof we refer to \cite[Theorem 3.2]{sadiqtamasan01}.

\begin{theorem}\label{NecSuf}
For $0<\alpha<1$, let $\bg \in l^{1,1}_{\INF}(\Gamma)\cap C^\alpha(\Gamma;l_1)$.
For $\bg$ to be boundary value of an $\mathcal{L}$-analytic function it is necessary and sufficient that
\begin{equation} \label{NecSufEq}
 (I+i\HT) \bg = {\bf {0}},
 \end{equation} where $\HT$ is as in \eqref{hilbertT}.
\end{theorem}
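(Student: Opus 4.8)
The plan is to reduce the whole equivalence to a single Plemelj--Sokhotski (Cauchy jump) formula for the Bukhgeim--Cauchy operator $\B$ of Theorem~\ref{BukhgeimCauchyThm}. Concretely, I would first establish the boundary trace identity
\begin{equation}\label{jumpstar}
\lim_{\OM\ni z\to\xi}(\B\bg)(z)=\frac12\,\bg(\xi)-\frac{i}{2}\,(\HT\bg)(\xi),\qquad \xi\in\Gam,
\end{equation}
valid for every $\bg\in l^{1,1}_{\INF}(\Gam)\cap C^\alpha(\Gam;l_1)$. Once \eqref{jumpstar} is in hand, both implications follow from two structural facts of Bukhgeim's theory: that $\B\bg$ is $\mathcal{L}$-analytic and continuous up to $\ol\OM$ (Theorem~\ref{BukhgeimCauchyThm}), and that an $\mathcal{L}$-analytic map is reproduced by $\B$ applied to its own trace, i.e. $\bv=\B(\bv|_\Gam)$ in $\OM$ (the Cauchy integral formula of \cite{bukhgeim_book,finch}).

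Granting \eqref{jumpstar}, the two directions are immediate. For sufficiency, assume $(I+i\HT)\bg=\mathbf{0}$ and set $\bu:=\B\bg$. Then $\bu$ is $\mathcal{L}$-analytic, and by \eqref{jumpstar} its trace is $\tfrac12(\bg-i\HT\bg)$; since the hypothesis gives $i\HT\bg=-\bg$, we get $\bu|_\Gam=\tfrac12(\bg+\bg)=\bg$, so $\bg$ is the boundary value of an $\mathcal{L}$-analytic map. For necessity, let $\bg=\bv|_\Gam$ with $\bv$ $\mathcal{L}$-analytic. The reproducing formula yields $\bv=\B\bg$ in $\OM$; passing to the boundary with \eqref{jumpstar} gives $\bg=\bv|_\Gam=\tfrac12(\bg-i\HT\bg)$, which rearranges to $(I+i\HT)\bg=\mathbf{0}$.

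The heart of the argument is therefore \eqref{jumpstar}. I would split $(\B\bg)_{-n}$ into the $j=0$ term of the first sum and the remainder. The $j=0$ term is the classical Cauchy integral $\tfrac{1}{2\pi i}\int_\Gam g_{-n}(\zeta)(\zeta-z)^{-1}d\zeta$, whose interior limit is $\tfrac12 g_{-n}(\xi)+\tfrac{1}{2\pi i}\,\textrm{p.v.}\!\int_\Gam g_{-n}(\zeta)(\zeta-\xi)^{-1}d\zeta$; since $\tfrac{1}{2\pi i}=-\tfrac{i}{2}\cdot\tfrac1\pi$, this reproduces the leading term of $-\tfrac{i}{2}\HT\bg$ together with the jump $\tfrac12 g_{-n}(\xi)$. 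For $j\ge1$ I would pair the $j$-th term of the first sum with the $j$-th term of the second sum (both carry the coefficient $g_{-n-j}$); writing both kernels as $(\ol{\zeta-z}/(\zeta-z))^{j}$ times $d\zeta/(\zeta-z)$, respectively $d\ol\zeta/\ol{\zeta-z}$, their difference collapses to
\begin{equation}\label{collapse}
\frac{1}{2\pi i}\int_\Gam g_{-n-j}(\zeta)\left(\frac{\ol{\zeta-z}}{\zeta-z}\right)^{j}\left\{\frac{d\zeta}{\zeta-z}-\frac{d\ol\zeta}{\ol{\zeta-z}}\right\}.
\end{equation}
The boundary combination $\{d\zeta/(\zeta-\xi)-d\ol\zeta/\ol{\zeta-\xi}\}=2i\,\im\!\big(d\zeta/(\zeta-\xi)\big)$ is a \emph{bounded} measure on a smooth $\Gam$ (its $1/|\zeta-\xi|$ singularity is purely real and cancels), and the factor $(\ol{\zeta-z}/(\zeta-z))^{j}$ is unimodular; hence \eqref{collapse} extends continuously to $\Gam$ with no jump, and summing over $j\ge1$ with $\tfrac{1}{2\pi i}=-\tfrac{i}{2}\tfrac1\pi$ reproduces exactly the series term of $-\tfrac{i}{2}\HT\bg$. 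Crucially, it is only the \emph{paired} terms that are regular: each first-sum term alone is a singular integral, so the recombination must precede the limit.

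The step I expect to be the main obstacle is the justification of interchanging the boundary limit $z\to\xi$ with the infinite summation over $j$, together with the uniform control of the principal-value integral in the $j=0$ term. This is where the quantitative hypotheses enter: the summability in $C^\alpha(\Gam;l_1)$ controls $\sum_j|g_{-n-j}|$ so that \eqref{collapse}, integrated against a bounded measure, converges and passes to the limit by dominated convergence, while the weighted decay in $l^{1,1}_{\INF}(\Gam)$ and the Hölder modulus give convergence of the principal value with bounds uniform in $n$ and $j$. The secondary ingredient to be pinned down is the reproducing identity $\bv=\B(\bv|_\Gam)$ used in the necessity direction; with the regularity of $\B\bg$ from Theorem~\ref{BukhgeimCauchyThm} (refined via Proposition~\ref{extraRegularity} where extra smoothness is needed), it reduces to uniqueness of $\mathcal{L}$-analytic maps with prescribed trace, which is the genuinely non-classical input supplied by Bukhgeim's theory.
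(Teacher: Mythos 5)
Your proposal is correct and follows essentially the same route as the proof the paper relies on (it defers to \cite[Theorem 3.2]{sadiqtamasan01}): there, too, the argument rests on the Sokhotski--Plemelj jump formula for the Bukhgeim--Cauchy operator $\B$ --- with the jump carried entirely by the $j=0$ Cauchy term and the $j\ge 1$ terms paired so that the kernel $d\zeta/(\zeta-\xi)-d\ol{\zeta}/\ol{\zeta-\xi}$ is bounded --- together with Bukhgeim's reproducing Cauchy formula $\bv=\B(\bv|_\Gam)$ for the necessity direction. The two technical points you flag (interchanging the limit with the sum over $j$, and the reproducing identity) are exactly the ones handled in the cited reference.
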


\section{The non-attenuated case } \label{Range2TensorNonAtten}

In this section we assume $a \equiv 0$. We establish necessary and sufficient conditions for a sufficiently smooth function on $\Gam \times \sph$ to be the $X$-ray data of some sufficiently smooth real valued symmetric 2-tensor $\bF$.
For $\tta = (\cos \fii , \sin \fii) \in \sph$, a calculation shows that
\begin{align} \label{Ftta_Calc}
    \langle \bF(x) \tta, \tta \rangle &= f_{0}(x)+\ol{f_{2}(x)} e^{2i\fii}+ f_{2}(x) e^{-2i\fii},
\end{align} where
\begin{align}\label{f0f2Eq}
f_{0}(x) = \frac{f_{11}(x)+f_{22}(x)}{2}, \;\text{and} \;
f_{2}(x) =  \frac{f_{11}(x)-f_{22}(x)}{4} + i \frac{f_{12}(x)}{2}.
\end{align}

The transport equation in \eqref{TransportEq2Tensor}
becomes \begin{align}\label{2TtransportEQ_a=0}
&\tta\cdot\nabla u(x,\tta)= f_{0}(x)+\ol{f_{2}(x)} e^{2i\fii}+ f_{2}(x) e^{-2i\fii}, \quad x\in\OM.
\end{align}

For $z=x_1+ix_2\in \OM$, we consider the Fourier expansions of $u(z,\cdot)$ in the angular variable $\tta= ( \cos\fii,\sin\fii)$:
\begin{align*}
u(z,\theta) = \sum_{-\infty}^{\infty} u_{n}(z) e^{in\fii}.
\end{align*}Since $u$ is real valued its Fourier modes occur in conjugates,
$$u_{-n}(z)= \ol{u_{n}(z)}, \quad n \geq 0, \;z\in\OM.$$

With the Cauchy-Riemann operators defined in \eqref{CauchyRiemannOp} the advection operator becomes
\begin{align*}
\tta\cdot\nabla = e^{-i\fii} \ol{\del} + e^{i\fii} \del.
\end{align*}
Provided appropriate convergence of the series (given by smoothness in the angular variable)
we see that if $u$ solves \eqref{2TtransportEQ_a=0} then its Fourier modes solve the system
\begin{align} \label{fzeroEq}
&\ol{\del} u_{1}(z) + \del u_{-1}(z) = f_{0}(z), \\   \label{f2Eq}
&\ol{\del} u_{-1}(z) + \del u_{-3}(z) = f_{2}(z),
 \\ \label{NonAttenRT2AnalyticEq_Even}
&\ol{\del} u_{2n}(z) + \del u_{2n-2}(z) = 0, \quad n \leq 0,
 \\ \label{NonAttenRT2AnalyticEq_Odd}
&\ol{\del} u_{2n-1}(z) + \del u_{2n-3}(z) = 0, \quad n \leq -1,
\end{align}

The range characterization is given in terms of the trace
\begin{align}
g:=u \lvert_{\Gam \times \sph} = \left \{ \begin{array}{ll}
X \bF(x,\tta) , & (x,\tta) \in \Gam_{+}, \\
0 , &  (x,\tta) \in \Gam_{-} \cup \Gam_{0}. \\
\end{array}
  \right.
\end{align}
More precisely, in terms of its Fourier modes in the angular variables:
\begin{align}\label{FourierData}
g(\zeta,\theta) = \sum_{-\infty}^{\infty} g_{n}(\zeta) e^{in\fii}, \quad \zeta \in \Gam.
\end{align}Since the trace $g$ is also real valued, its Fourier modes will satisfy
\begin{align}
g_{-n}(\zeta)= \ol{g_{n}(\zeta)}, \quad n \geq 0,\;\zeta\in\Gam.
\end{align}
From the negative even modes, we built the sequence
\begin{align}\label{gEven}
\bg^{even}:=\langle g_{0}, {g}_{-2}, g_{-4}, ... \rangle.
\end{align}
From the negative odd modes starting from mode $-3$, we built the sequence
\begin{align}\label{gOddNoMinus1}
\bg^{odd}:=\langle g_{-3}, {g}_{-5}, g_{-7}, ... \rangle.
\end{align}

Next we characterize the data $g$ in terms of the Hilbert Transform $\HT$ in \eqref{hilbertT}. 
We will construct simultaneously the right hand side of the transport equation \eqref{2TtransportEQ_a=0} and the solution $u$ whose trace matches the boundary data $g$. Construction of $u$ is via its Fourier  modes. We first construct the negative modes and then the positive modes are constructed by conjugation. Except from negative one mode $u_{-1}$ all non-positive modes are defined by  
Bukhgeim-Cauchy integral formula in \eqref{BukhgeimCauchyFormula} using boundary data.  Other then having the trace $g_{-1}$ on the boundary $u_{-1}$ is unconstrained. It is chosen arbitrarily from the class of functions
\begin{align}\label{RT_PsiClass}
\Psi_{g}:=
&\left \{ \vphantom{\int}
\psi \in C^{1}(\ol\OM;\mathbb{C}):\psi \lvert_{\Gam}= g_{-1} \right\}.
\end{align}   

\begin{theorem}[Range characterization in the non-attenuated case]\label{RT2Tensor}

Let $\alpha > 1/2$.

(i) Let $\bF \in C_{0}^{1,\alpha}(\OM;\BR^{2 \times 2})$. For $g := \left \{ \begin{array}{ll}
X \bF(x,\tta) , & (x,\tta) \in \Gam_{+}, \\
0 , &  (x,\tta) \in \Gam_{-} \cup \Gam_{0}, \\
\end{array}
  \right. $ consider the corresponding sequences $\bg^{even}$ as in
\eqref{gEven} and $\bg^{odd}$ as in \eqref{gOddNoMinus1}. Then
$\bg^{even}, \bg^{odd} \in l^{1,1}_{\INF}(\Gamma)\cap C^\alpha(\Gamma;l_1)$ satisfy
\begin{align}\label{RT2TensorCond1}
&[I+i\HT]\bg^{even} = {\bf {0}}, \\ \label{RT2TensorCond2}
&[I+i\HT]\bg^{odd} = {\bf {0}},
\end{align} where the operator $\HT$ is the Hilbert transform in \eqref{hilbertT}.

(ii) Let $g\in C^{\alpha} \left(\Gam; C^{1,\alpha}(\sph) \right)\cap C(\Gam;C^{2,\alpha}(\sph))$ be real valued with $g \lvert_{\Gam_{-} \cup \Gam_{0}}=0$.
If the corresponding sequence $\bg^{even}, \bg^{odd} \in Y_{\alpha}$ satisfies \eqref{RT2TensorCond1} and  \eqref{RT2TensorCond2},
then there exists a real valued symmetric 2-tensor $\bF \in C(\OM;\BR^{2 \times 2})$, such that $g \lvert_{\Gam_{+}} = X\bF$.
Moreover for each $\psi \in \Psi_{g}$ in \eqref{RT_PsiClass}, there is a unique real valued symmetric 2-tensor $\bF_{\psi}$ such that
$g\lvert_{\Gam_{+}} = X\bF_{\psi}$.
\end{theorem}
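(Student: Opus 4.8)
The plan is to exploit the way the Fourier system \eqref{fzeroEq}--\eqref{NonAttenRT2AnalyticEq_Odd} decouples into two independent $\mathcal{L}$-analytic chains together with one exceptional free mode. Regrouping \eqref{NonAttenRT2AnalyticEq_Even} and \eqref{NonAttenRT2AnalyticEq_Odd}, one sees that the even sequence $\bu^{even}:=\langle u_0,u_{-2},u_{-4},\dots\rangle$ and the odd sequence $\bu^{odd}:=\langle u_{-3},u_{-5},u_{-7},\dots\rangle$ each satisfy $\ol\del\bu+\mathcal{L}\del\bu=0$, i.e. are $\mathcal{L}$-analytic in the sense of \eqref{Aanalytic}, and that their boundary traces are precisely $\bg^{even}$ and $\bg^{odd}$. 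The mode $u_{-1}$ is the exception: it couples to the sources through \eqref{fzeroEq}--\eqref{f2Eq} and belongs to neither chain, which is exactly why it stays free up to its trace $g_{-1}$, i.e. why the admissible reconstructions are parametrized by $\Psi_g$.

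For part (i), I would first solve \eqref{TransportEq2Tensor}--\eqref{UGamMinus} with $a\equiv 0$ for the given $\bF\in C_0^{1,\alpha}(\OM;\BR^{2\times2})$ and record that $u\lvert_\Gam=g$. The $C_0^{1,\alpha}$ regularity of $\bF$, together with standard regularity for the transport equation, should provide enough smoothness of $u$ in the angular variable $\fii$ to yield the decay and Hölder estimates that place $\bg^{even},\bg^{odd}$ in $l^{1,1}_\INF(\Gam)\cap C^\alpha(\Gam;l_1)$; establishing these quantitative mode bounds is the only laborious point of (i). Since $\bu^{even}$ and $\bu^{odd}$ are $\mathcal{L}$-analytic with these traces, the necessity direction of Theorem \ref{NecSuf} immediately produces \eqref{RT2TensorCond1} and \eqref{RT2TensorCond2}.

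For part (ii) I would run the construction in reverse. By the sufficiency direction of Theorem \ref{NecSuf}, conditions \eqref{RT2TensorCond1}--\eqref{RT2TensorCond2} make $\bg^{even},\bg^{odd}$ traces of $\mathcal{L}$-analytic maps, which I realize explicitly by the Bukhgeim--Cauchy formula, setting $\bu^{even}:=\B\bg^{even}$ and $\bu^{odd}:=\B\bg^{odd}$. Theorem \ref{BukhgeimCauchyThm} guarantees $\mathcal{L}$-analyticity, and the reproducing property of $\B$ on $\mathcal{L}$-analytic boundary values forces $\B\bg\lvert_\Gam=\bg$ under \eqref{RT2TensorCond1}--\eqref{RT2TensorCond2}, so the traces are correct; the hypothesis $\bg^{even},\bg^{odd}\in Y_\alpha$ with $\alpha>1/2$ lets me invoke Proposition \ref{extraRegularity} for the extra regularity $C^{1,\alpha}(\OM;l_1)\cap C^\alpha(\ol\OM;l_1)\cap C^2(\OM;l_\infty)$ needed below. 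I then fix any $\psi\in\Psi_g$, set $u_{-1}:=\psi$, and define the sources by $f_2:=\ol\del u_{-1}+\del u_{-3}$ as in \eqref{f2Eq} and $f_0:=\del u_{-1}+\ol{\del u_{-1}}$ as in \eqref{fzeroEq} (using $u_1=\ol{u_{-1}}$), so that $f_0$ is automatically real. Inverting \eqref{f0f2Eq} yields a real symmetric tensor $\bF_\psi$ with entries $f_{11}=f_0+2\re f_2$, $f_{22}=f_0-2\re f_2$, $f_{12}=2\im f_2$, continuous on $\OM$ because $\psi\in C^1$ and the chains are $C^{1,\alpha}$.

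It remains to verify that $\bF_\psi$ reproduces the data. Setting $u:=\sum_n u_n e^{in\fii}$ with $u_n:=\ol{u_{-n}}$ for $n>0$, the decay from Proposition \ref{extraRegularity} should guarantee that the series and its first derivatives converge, and a mode-by-mode check shows the coefficient of $e^{ik\fii}$ in $\tta\cdot\nabla u$ equals $\ol\del u_{k+1}+\del u_{k-1}$, which matches $f_0+\ol{f_2}e^{2i\fii}+f_2 e^{-2i\fii}$ precisely by \eqref{fzeroEq}--\eqref{NonAttenRT2AnalyticEq_Odd} and their conjugates; hence $u$ solves \eqref{2TtransportEQ_a=0}. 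Its trace is $u\lvert_\Gam=g$, so in particular $u\lvert_{\Gam_-}=0$, and uniqueness for \eqref{TransportEq2Tensor}--\eqref{UGamMinus} together with \eqref{u_Gam+} gives $g\lvert_{\Gam_+}=X\bF_\psi$. Uniqueness of $\bF_\psi$ for fixed $\psi$ is then clear, since $\psi$ determines all modes, hence $f_0,f_2$, hence $\bF_\psi$. I expect the genuine obstacle to be this last verification: controlling the resummation so that $u$ is a bona fide $C^1$ solution with the asserted boundary trace, which is exactly where the sharpened regularity of the Bukhgeim--Cauchy transform in Proposition \ref{extraRegularity} must be used.
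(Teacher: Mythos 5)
Your proposal is correct and follows essentially the same route as the paper: split the modes into the two $\mathcal{L}$-analytic chains $\langle u_0,u_{-2},\dots\rangle$ and $\langle u_{-3},u_{-5},\dots\rangle$, apply Theorem \ref{NecSuf} for necessity, and for sufficiency realize the chains via the Bukhgeim--Cauchy operator with the regularity of Proposition \ref{extraRegularity}, leave $u_{-1}=\psi$ free, and define $f_0=2\re(\del\psi)$, $f_2=\ol{\del}\psi+\del u_{-3}$ exactly as in \eqref{f0_f2_defn_a=0}. The only point you leave slightly informal --- the angular regularity placing $\bg^{even},\bg^{odd}$ in the required spaces and the resummation of the Fourier series into a genuine $C^1$ solution --- is handled in the paper by citing Proposition 4.1 and Corollary 4.1 of \cite{sadiqtamasan01}, so no substantive gap remains.
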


\begin{proof} (i) {\bf Necessity}

Let $\bF \in C_{0}^{1,\alpha}(\OM;\BR^{2 \times 2})$. Since $\bF$ is compactly supported inside \
$\OM$, for any point at the boundary there is a cone of lines which do not meet the support.
Thus $g \equiv 0$ in the neighborhood of the variety $\Gam_0$
 which yields $g \in C^{1,\alpha}(\Gam \times \sph)$. Moreover, $g$ is the trace on $\Gam \times \sph$ of a solution $u \in C^{1,\alpha}(\ol{\OM} \times \sph)$ of the transport equation \eqref{2TtransportEQ_a=0}.
By \cite[Proposition 4.1]{sadiqtamasan01} $\bg^{even}, \bg^{odd} \in l^{1,1}_{\INF}(\Gamma)\cap C^\alpha(\Gamma;l_1)$. 

If $u$ solves \eqref{2TtransportEQ_a=0} then its Fourier modes satisfy \eqref{fzeroEq}, \eqref{f2Eq}, \eqref{NonAttenRT2AnalyticEq_Even} and \eqref{NonAttenRT2AnalyticEq_Odd}. Since the negative even Fourier modes $u_{2n}$ of $u$ satisfies the system \eqref{NonAttenRT2AnalyticEq_Even} for $n \leq 0$, then
$$z\mapsto  \bu^{even}(z):= \langle u_{0}(z), u_{-2}(z), u_{-4}(z), u_{-6}(z), \cdots \rangle$$
is $\mathcal{L}$-analytic in $\OM$ and the necessity part in Theorem  \ref{NecSuf} yields  \eqref{RT2TensorCond1}.

The equation \eqref{NonAttenRT2AnalyticEq_Odd} for  negative odd Fourier modes $u_{2n-1}$ starting from mode $-3$ yield that the sequence valued map
$$z\mapsto  \bu^{odd}(z):= \langle u_{-3}(z), u_{-5}(z), u_{-7}(z), \cdots \rangle$$
is $\mathcal{L}$-analytic in $\OM$ and the necessity part in Theorem  \ref{NecSuf} yields  \eqref{RT2TensorCond2}.

(ii) {\bf Sufficiency}

To prove the sufficiency we will construct a real valued symmetric 2-tensor $\bF$ in $\OM$ and a real valued function $u \in C^{1}(\OM\times \sph)\cap C(\ol\OM \times \sph)$
such that $u \lvert_{\Gam \times \sph}=g$ and $u$ solves \eqref{2TtransportEQ_a=0} in $\OM$.
The construction of such $u$ is in terms of its Fourier modes in the angular variable and it is done in several steps.

{\bf Step 1: The construction of negative even modes $u_{2n}$ for $n \leq 0$.}

Let $g\in C^{\alpha} \left(\Gam; C^{1,\alpha}(\sph) \right)\cap C(\Gam;C^{2,\alpha}(\sph))$  be real valued with $g \lvert_{\Gam_{-} \cup \Gam_{0}}=0$.
Let the corresponding sequences  $\bg^{even}$  satisfying \eqref{RT2TensorCond1} and $\bg^{odd}$  satisfying \eqref{RT2TensorCond2}.
By \cite[Proposition 4.1(ii)]{sadiqtamasan01} $\bg^{even}, \bg^{odd}\in Y_{\alpha}$.
Use the Bukhgeim-Cauchy Integral formula \eqref{BukhgeimCauchyFormula} to 
construct the negative even Fourier modes:
\begin{align}\label{construction_EVENS}
\langle u_{0}(z), u_{-2}(z), u_{-4}(z), u_{-6}(z), ... \rangle := \B \bg^{even}(z), \quad z\in \OM.
\end{align}
By Theorem \ref{BukhgeimCauchyThm}, the sequence valued map
\begin{align*}
z\mapsto \langle u_{0}(z), u_{-2}(z), u_{-4}(z),  ... \rangle ,
\end{align*}is $\mathcal{L}$-analytic in $\OM$, thus the equations
\begin{align}\label{uEvenL1}
\ol{\del} u_{-2k} + \del u_{-2k-2} = 0,
\end{align} are satisfied for all $k \geq 0$.
Moreover, the hypothesis \eqref{RT2TensorCond1} and the sufficiency part of Theorem \ref{NecSuf} yields that they extend continuously to $\Gam$ and
\begin{align}
u_{-2k}|_\Gam=g_{-2k},\quad k\geq 0.
\end{align}

{\bf Step 2: The construction of positive even modes $u_{2n}$ for $n \geq 1$.}

All of the positive even Fourier modes are constructed by conjugation:
\begin{align}\label{construct_even+}
u_{2k}:=\ol{u_{-2k}},\quad k\geq 1.
\end{align}
 By conjugating \eqref{uEvenL1} we note that the positive even Fourier modes also satisfy
\begin{align}\label{uEvenL1+}
\ol{\del} u_{2k+2} + \del u_{2k} = 0,\quad k\geq 0.
\end{align}Moreover, they extend continuously to $\Gam$ and
\begin{align}
u_{2k}|_\Gam=\ol{u_{-2k}}|_\Gam=\ol{g_{-2k}}=g_{2k},\quad k\geq 1.
\end{align}

Thus, as a summary, we have shown that
\begin{align}\label{PostiveEvens}
&\ol{\del} u_{2k} + \del{u_{2k-2}} = 0, \quad \forall k\in \mathbb{Z}, \\ \label{PostiveEvensTrace}
&u_{2k}\lvert_{\Gam} = g_{2k}, \quad \forall k\in \mathbb{Z}.
\end{align}

{\bf Step 3: The construction of modes $u_{-1}$ and $u_{1}$.}

Let $\psi \in \Psi_{g}$ as in \eqref{RT_PsiClass}.
We define 
\begin{align}\label{uminus1Psi}
u_{-1} := \psi, \quad \text{and} \quad u_{1} := \ol{\psi}.
\end{align}
Since $g$ is real valued, we have 
\begin{align}\label{u1trace}
u_1 \lvert_{\Gam}= \ol{g_{-1}}= g_{1}.
\end{align}

{\bf Step 4: The construction of negative odd modes $u_{2n-1}$ for $n \leq -1$.}

Use the Bukhgeim-Cauchy Integral formula \eqref{BukhgeimCauchyFormula} to 
construct the other odd negative Fourier modes:
\begin{align}\label{constructionODDSnegative}
\langle u_{-3}(z), u_{-5}(z),  \cdots \rangle
:= \B \bg^{odd}(z), \quad z\in \OM.
\end{align} 
By Theorem \ref{BukhgeimCauchyThm}, the sequence valued map
\begin{align*}
z\mapsto \langle u_{-3}(z), u_{-5}(z), u_{-7}(z),  ..., \rangle ,
\end{align*}is $\mathcal{L}$-analytic in $\OM$, thus the equations
\begin{align} \label{negODDS}
\ol{\del} u_{2k-1} + \del u_{2k-3} &= 0, 
\end{align} are satisfied for all $k\leq -1$.
Moreover, the hypothesis \eqref{RT2TensorCond2} and the sufficiency part of Theorem \ref{NecSuf} yields that they extend continuously to $\Gam$ and
\begin{align}\label{NegOddsTrace}
u_{2k-1}|_\Gam=g_{2k-1},\quad \forall k\leq -1.
\end{align}

{\bf Step 5: The construction of positive odd modes $u_{2n+1}$ for $n \geq 1$.}

All of the positive odd Fourier modes are constructed by conjugation:
\begin{align}\label{construct_odd+}
u_{2k+3}:=\ol{u_{-(2k+3)}},\quad k\geq 0.
\end{align}By conjugating \eqref{negODDS} we note that the positive odd Fourier modes also satisfy
\begin{align}\label{PostiveOdds}
\ol{\del} u_{2k+3} + \del{u_{2k+1}} = 0, \quad \forall k \geq 1.
\end{align} Moreover, they extend continuously to $\Gam$ and
\begin{align}\label{PostiveOddsTrace}
u_{2k+3}|_\Gam=\ol{u_{-(2k+3)}}|_\Gam=\ol{g_{-(2k+3)}}=g_{2k+3},\quad k\geq 0.
\end{align}

{\bf Step 6: The construction of the tensor field $\bF_{\psi} $ whose $X$-ray data is $g$.}

We define the 2-tensor field
\begin{align}\label{Fdefn_a=0}
\bF_{\psi} := \begin{pmatrix} f_{0} + 2\re f_2  & 2 \im f_2 \\ 2 \im f_2 & f_{0} - 2\re f_2 \end{pmatrix},
\end{align}
 where
\begin{align}\label{f0_f2_defn_a=0}
f_{0}=2 \re (\del\psi), \; \text{and} \; f_{2}=\ol{\del}\psi+\del u_{-3}.
\end{align}
In order to show $g \lvert_{\Gam_{+}} = X\bF_{\psi}$ with $\bF_{\psi}$ as in \eqref{Fdefn_a=0}, 
we define the real valued function $u$ via its Fourier modes
\begin{align}\label{definitionU_a=0}
u(z, \tta)&:=u_{0}(z)+ \psi(z)e^{-i \fii} + \ol{\psi}(z)e^{i \fii} \\ \nonumber
&\qquad + \sum_{n = 2}^{\INF} u_{-n}(z)e^{-i n\fii} + \sum_{n = 2}^{\INF} u_{n}(z)e^{i n\fii},
\end{align} and check that it has the trace $g$ on $\Gam$ and satisfies the transport equation \eqref{2TtransportEQ_a=0}.

Since $g\in C^{\alpha} \left(\Gam; C^{1,\alpha}(\sph) \right)\cap C(\Gam;C^{2,\alpha}(\sph))$, we use \cite[Corollary 4.1]{sadiqtamasan01} and
\cite[Proposition 4.1 (iii)]{sadiqtamasan01} to conclude that $u$ defined in \eqref{definitionU_a=0} belongs to $C^{1,\alpha}(\OM \times \sph)\cap C^{\alpha}(\ol{\OM}\times \sph)$. In particular $u(\cdot,\tta)$ for $\tta= (\cos\fii,\sin\fii)$ extends to the boundary and its trace satisfies
\begin{align*}
u(\cdot,\tta)\lvert_{\Gam} &=\left.\left ( u_{0}+ \psi e^{-i \fii} + \ol{\psi}e^{i \fii} + \sum_{n = 2}^{\INF} u_{-n}e^{-i n\fii} + \sum_{n = 2}^{\INF} u_{n}e^{i n\fii}\right) \right\lvert_{\Gam}\\
&=u_{0}\lvert_{\Gam}+ \psi \lvert_{\Gam}\,e^{-i \fii} + \ol{\psi}\lvert_{\Gam}\,e^{i \fii} + \sum_{n = 2}^{\INF} u_{-n}\lvert_{\Gam}e^{-i n\fii} + \sum_{n = 2}^{\INF} u_{n}\lvert_{\Gam}e^{i n\fii}\\
&=g_{0}+ g_{-1}e^{-i \fii} +g_{1}e^{i \fii}+\sum_{n = 2}^{\INF} g_{-n}e^{-i n\fii} + \sum_{n = 2}^{\INF} g_{n}e^{i n\fii} \\
&= g(\cdot,\tta),
\end{align*} where in the third equality above we used \eqref{PostiveEvensTrace}, \eqref{NegOddsTrace},\eqref{PostiveOddsTrace}, \eqref{u1trace} and definition of $\psi \in \Psi_{g}$ in \eqref{RT_PsiClass}.

Since $u\in C^{1,\alpha}(\OM \times \sph)\cap C^{\alpha}(\ol{\OM}\times \sph)$, the following calculation is also justified:
\begin{align*}
\tta \cdot \nabla u &= e^{-i \fii} \ol{\del}u_0 + e^{i \fii} \del u_0 +
+e^{-2i \fii} \ol{\del}\psi + \ol{\del \psi} + \del \psi +e^{2i \fii} \del \ol{\psi} \\
& \qquad + \sum_{n = 2}^{\INF} \ol{\del}u_{-n}e^{-i (n+1) \fii} + \sum_{n = 2}^{\INF} \del u_{-n}e^{-i (n-1) \fii} \\
& \qquad + \sum_{n = 2}^{\INF} \ol{\del}u_{n}e^{i (n-1) \fii} + \sum_{n = 2}^{\INF} \del u_{n}e^{i (n+1) \fii}.
\end{align*}
Rearranging the modes in the above equation yields
\begin{align*}
\tta \cdot \nabla u &=  e^{-2i \fii} (\ol{\del}\psi+\del u_{-3})  + e^{2i \fii} (\del \ol{\psi} + \ol{\del}u_{3}) + \ol{\del \psi} + \del \psi \\
& \qquad + e^{-i \fii} (\ol{\del}u_0 +\del u_{-2}) + e^{i \fii} (\del u_0 +\ol{\del}u_2)  \\
& \qquad +\sum_{n = 1}^{\INF} (\ol{\del}u_{-n}+ \del u_{-n-2})e^{-i (n+1) \fii} + \sum_{n = 1}^{\INF} ( \ol{\del}u_{n+2}+\del u_{n}) e^{i (n+1) \fii}.
\end{align*}
Using \eqref{PostiveEvens}, \eqref{negODDS}, and \eqref{PostiveOdds} simplifies the above equation
\begin{align*}
\tta \cdot \nabla u &= e^{-2i \fii} (\ol{\del}\psi+\del u_{-3})  + e^{2i \fii} (\del \ol{\psi} + \ol{\del}u_{3}) + \ol{\del \psi} + \del \psi.
\end{align*}
Now using \eqref{f0_f2_defn_a=0}, we conclude \eqref{2TtransportEQ_a=0}.
\begin{align*}
\tta \cdot \nabla u &= e^{-2i \fii} f_2 + e^{2i \fii} \ol{f_2}+f_0 = \langle \bF_{\psi} \tta ,  \tta \rangle.
\end{align*}

\end{proof}
As the source is supported inside, there are no incoming fluxes: hence the trace
of a solution $u$ of  \eqref{2TtransportEQ_a=0} on $\Gam_{-}$ is zero. We give next a range condition only in terms of $g$ on $\Gam_{+}$, where $g:= u \lvert_{\Gam \times \sph}$.
More precisely, let $\tilde{u}$ be the solution of the boundary value problem
\begin{align}\label{Ref_TtransportModel}
\begin{aligned}
\tta\cdot\nabla \tilde{u}(x,\tta)&= \langle \bF(x)\tta,  \tta \rangle, \quad x\in\OM, \\
 \tilde{u}(z, \tta)&= -\frac{1}{2}g \lvert_{\Gam_{+}}(z,-\tta), \quad (z,\tta)\in \Gam_{-}.
\end{aligned}
\end{align}
Then one can see that 
\begin{align}\label{utilde+}
\tilde{u} \lvert_{\Gam_{+}} = \frac{1}{2} g \lvert_{\Gam_{+}},
\end{align}
and therefore $\tilde{u} \lvert_{\Gam \times \sph}$ is an odd function of $\tta$.
This shows that we can work with the following odd extension:
\begin{align}\label{gtildedefn}
\tilde{g}(z,\tta) := \frac{g(z,\tta)-g(z,-\tta)}{2}, \quad (z,\tta) \in (\Gam \times \sph) \backslash \Gam_{0,}
\end{align}
and $\tilde{g}=0 $ on $\Gam_0$. 
Note that  $\tilde{g}$ is the trace of $\tilde{u}$ on $\Gam \times \sph$.

The range characterization can be given now in terms of the  odd Fourier modes of $\tilde{g}$, namely in terms of
\begin{align}\label{gtildeFourier}
 &\tilde{\bg}:=\langle \tilde{g}_{-3}, \tilde{g}_{-5}, \tilde{g}_{-7}, ... \rangle.
 \end{align}

\begin{cor}\label{RefineRT2Tensor}

Let $\alpha > 1/2$.

(i) Let $\bF \in C_{0}^{1,\alpha}(\OM;\BR^{2 \times 2})$, $\tilde{u}$ be the solution of \eqref{Ref_TtransportModel} and $\tilde{\bg}$ as in \eqref{gtildeFourier}.
Then $\tilde{\bg} \in l^{1,1}_{\INF}(\Gamma)\cap C^\alpha(\Gamma;l_1)$ and
\begin{align}\label{Ref_RT2TensorCond1}
&[I+i\HT] \tilde{\bg} =0,
\end{align} where the operator $\HT$ is the Hilbert transform in \eqref{hilbertT}.

(ii) Let $g\in C^{\alpha} \left(\Gam; C^{1,\alpha}(\sph) \right)\cap C(\Gam;C^{2,\alpha}(\sph))$ be 
real valued  with $g \lvert_{\Gam_{-} \cup \Gam_{0}} =0$.
Let $\tilde{g}$ be its odd extension as in \eqref{gtildedefn} and the corresponding $\tilde{\bg}$ as in \eqref{gtildeFourier}. If $\tilde{\bg}$  satisfies \eqref{Ref_RT2TensorCond1},
then there exists a real valued symmetric 2-tensor $\bF \in C(\OM;\BR^{2 \times 2})$, such that $g \lvert_{\Gam_{+}}= X\bF$.
Moreover for each $\psi \in \Psi_{g}$ in \eqref{RT_PsiClass}, there is a unique real valued symmetric 2-tensor $\bF_{\psi}$ such that
$g\lvert_{\Gam_{+}} = X\bF_{\psi}$.
\end{cor}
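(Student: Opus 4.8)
The plan is to deduce Corollary~\ref{RefineRT2Tensor} from the odd half of Theorem~\ref{RT2Tensor} by exploiting the parity of the transport problem in $\tta$. The source $\langle\bF(x)\tta,\tta\rangle$ in \eqref{2TtransportEQ_a=0} is even under $\tta\mapsto-\tta$, so the odd part of any solution is again a solution with the same source; in Fourier language the odd chain $\langle u_{-3},u_{-5},\dots\rangle$ is $\mathcal L$-analytic (equations \eqref{negODDS}), the source enters only at the transition modes $u_{\pm1}$, and the even modes are irrelevant to the odd part. The key bookkeeping to record first is that, since $\tilde g$ in \eqref{gtildedefn} is odd, its even Fourier modes vanish identically while its odd modes coincide with those of $g$; in particular $\tilde g_{-1}=g_{-1}$ and the sequence $\tilde{\bg}$ in \eqref{gtildeFourier} is exactly the $\bg^{odd}$ of \eqref{gOddNoMinus1}.

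Granting this identification, part~(i) is immediate: Theorem~\ref{RT2Tensor}(i) applied to $\bF$ already gives $\bg^{odd}\in l^{1,1}_{\INF}(\Gamma)\cap C^\alpha(\Gamma;l_1)$ together with \eqref{RT2TensorCond2}, and since $\tilde{\bg}=\bg^{odd}$ both the membership and \eqref{Ref_RT2TensorCond1} follow with no further work beyond the parity argument above.

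For the sufficiency in part~(ii) I would rerun the construction in the proof of Theorem~\ref{RT2Tensor}, but build only an odd solution. By \cite[Proposition 4.1(ii)]{sadiqtamasan01} the hypothesis \eqref{Ref_RT2TensorCond1} upgrades $\tilde{\bg}=\bg^{odd}$ to $Y_\alpha$, so Proposition~\ref{extraRegularity} applies. I would then set every even Fourier mode to zero, put $u_{-1}:=\psi$ and $u_1:=\ol\psi$ for a chosen $\psi\in\Psi_g$, define the lower negative odd modes by $\langle u_{-3},u_{-5},\dots\rangle:=\B\tilde{\bg}$, and obtain the positive odd modes by conjugation. By Theorem~\ref{BukhgeimCauchyThm} these modes are $\mathcal L$-analytic, and by \eqref{Ref_RT2TensorCond1} together with the sufficiency half of Theorem~\ref{NecSuf} they attain the prescribed traces $g_{2k-1}$ on $\Gam$. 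Defining $\bF_\psi$ through \eqref{Fdefn_a=0}--\eqref{f0_f2_defn_a=0} and assembling $\tilde u$ as in \eqref{definitionU_a=0} (now with $u_0\equiv0$), the function $\tilde u$ contains only odd harmonics and is therefore automatically odd in $\tta$; the even-mode equations \eqref{PostiveEvens} hold trivially, so the same mode-by-mode computation as in Theorem~\ref{RT2Tensor} shows that $\tilde u$ solves \eqref{2TtransportEQ_a=0} with source $\bF_\psi$ and has boundary trace $\tilde g$.

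The decisive step is to identify $g\lvert_{\gp}$ with $X\bF_\psi$. Since $\tilde u$ is odd with incoming trace $\tilde u\lvert_{\gm}(z,\tta)=\tilde g(z,\tta)=-\tfrac12\,g\lvert_{\gp}(z,-\tta)$, it is precisely the solution of the boundary value problem \eqref{Ref_TtransportModel}, and \eqref{utilde+} yields $\tilde u\lvert_{\gp}=\tfrac12\,g\lvert_{\gp}$. On the other hand the outgoing $X$-ray data of $\bF_\psi$ is carried by the odd part of the associated zero-incoming solution, which is again an odd solution with source $\bF_\psi$ and whose outgoing trace equals $\tfrac12 X\bF_\psi$. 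Matching the two odd solutions gives $\tfrac12 g\lvert_{\gp}=\tfrac12 X\bF_\psi$, hence $g\lvert_{\gp}=X\bF_\psi$, and the uniqueness of $\bF_\psi$ for each $\psi\in\Psi_g$ follows as in Theorem~\ref{RT2Tensor}. I expect this final matching to be the only real obstacle: it is exactly where the passage to the odd part pays off, reducing the two constraints of Theorem~\ref{RT2Tensor} to the single one \eqref{Ref_RT2TensorCond1}, and it is the step that must be justified with care precisely because the even modes — and with them the even constraint \eqref{RT2TensorCond1} — have been discarded, so one must verify that the odd solution of \eqref{Ref_TtransportModel} reproduces all of $g\lvert_{\gp}$ rather than only its chord-symmetrization.
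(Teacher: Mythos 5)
Your part (i) and the identification $\tilde{\bg}=\bg^{odd}$ (odd Fourier modes of the odd extension $\tilde g$ coincide with those of $g$, even ones vanish) are correct, and since the paper states the corollary without a written proof there is nothing to compare against there. The construction of the odd solution in part (ii) — even modes set to zero, $u_{\pm 1}$ from $\psi$, the chain $\langle u_{-3},u_{-5},\dots\rangle:=\B\tilde{\bg}$, $\bF_\psi$ as in \eqref{Fdefn_a=0}--\eqref{f0_f2_defn_a=0} — is also sound and does produce an odd $\tilde u$ with $\tta\cdot\nabla\tilde u=\langle\bF_\psi\tta,\tta\rangle$ and $\tilde u|_{\Gam\times\sph}=\tilde g$.

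The gap is exactly at the step you flag at the end, and your ``matching of the two odd solutions'' does not close it. Integrating the transport equation for $\tilde u$ along a chord with entry point $y$ and exit point $x$ in direction $\tta$ gives $X\bF_\psi(x,\tta)=\tilde g(x,\tta)-\tilde g(y,\tta)=\tfrac12\bigl(g(x,\tta)+g(y,-\tta)\bigr)$, i.e.\ only the chord-symmetrization of $g|_{\gp}$; to reach $g|_{\gp}=X\bF_\psi$ you need $g(x,\tta)=g(y,-\tta)$, which is not among the hypotheses and is not a consequence of \eqref{Ref_RT2TensorCond1} by any argument you give. Your appeal to \eqref{utilde+} is circular, since that identity was derived in the paper under the assumption $g|_{\gp}=X\bF$. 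And the two odd solutions you propose to match — your $\tilde u$ and the odd part of the zero-incoming solution for $\bF_\psi$ — need not coincide: they solve the same equation but have different incoming traces ($-\tfrac12 g|_{\gp}(\cdot,-\tta)$ versus $-\tfrac12 X\bF_\psi(\cdot,-\tta)$), so their difference is an odd first integral $w$ (with $\tta\cdot\nabla w=0$) whose trace on $\gp$ is precisely $\tfrac12\bigl(g|_{\gp}-X\bF_\psi\bigr)$; asserting the match is asserting the conclusion. To repair the proof you must show that the hypotheses of (ii) force the chord-reversal symmetry of $g|_{\gp}$ — equivalently, that the discarded even condition \eqref{RT2TensorCond1} holds automatically for the $\bg^{even}$ determined by $g=2\tilde g$ on $\gp$ and $g=0$ on $\gm\cup\Gamma_0$, so that Theorem \ref{RT2Tensor}(ii) applies — and no such argument appears in your proposal.
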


\section{The attenuated case} \label{Range2TensorAtten}

In this section we assume an attenuation $a \in C^{2,\alpha}(\ol{\OM})$, $\alpha>1/2$ with 
\begin{align*}
\underset{\ol{\OM}}{\min}\, a > 0.
\end{align*}
We establish necessary and sufficient conditions for a sufficiently smooth 
function $g$ on $\Gam \times \sph$ to be the attenuated $X$-ray data, with attenuation $a$, of some sufficiently smooth real symmetric 2-tensor, i.e. $g$ is the trace on $\Gam\times\sph$ of some solution $u$ of
\begin{align}\label{TransportEq2Tensor2}
\tta\cdot\nabla u(x,\tta) +a(x) u(x,\theta) &= \langle \bF(x)\tta, \tta \rangle, \quad (x,\tta) \in \Gam \times \sph.
\end{align}

Different from 1-tensor case in \cite{sadiqtamasan02} (where there is uniqueness), in the 2-tensor case there is non-uniqueness: see the class of function in \eqref{PsiClass}.

As in \cite{sadiqtamasan01} we start by the reduction to the non-attenuated case via the special integrating factor $e^{-h}$, where $h$ is explicitly defined in terms of $a$ by
\begin{align}\label{hDefn}
h(z,\tta) := Da(z,\theta) -\frac{1}{2} \left( I - i H \right) Ra(z\cdot \tta^{\perp},\tta),
\end{align}where $\tta^\perp$ is  orthogonal  to $\tta$, 
$Da(z,\tta) =\ds \int_{0}^{\INF} a(z+t\tta)dt$ is the divergence beam transform of the attenuation $a$, 
$Ra(s,\tta) = \ds \int_{-\INF}^{\INF} a\left( s \tta^{\perp} +t \tta \right)dt$ is the Radon transform of
the attenuation $a$, and
the classical Hilbert transform $H h(s) = \ds \frac{1}{\pi} \int_{-\INF}^{\INF} \frac{h(t)}{s-t}dt $ is 
taken in the first variable and evaluated at $s = z \cdotp \tta^{\perp}$.  The function $h$  was first considered in the work of Natterer \cite{naterrerBook}; see also \cite{finch}, and \cite{bomanStromberg} for elegant arguments that show how $h$ extends from $\sph$ inside the disk as an analytic map.

The lemma \ref{hproperties} and lemma \ref{intrinsic} below were proven in \cite{sadiqtamasan02} for $a$ vanishing at the boundary, $a\in C^{1,\alpha}_{0}(\ol \OM)$, $\alpha>1/2$. We explain here why the vanishing assumption is not necessary: we extend $a$ in a neighbourhood $\tilde{\OM}$ of $\OM$ with compact support, $\tilde{a} \in C_{0}^{1,\alpha}(\tilde{\OM})$. We apply the results 
\cite[Lemma 4.1 and Lemma 4.2]{sadiqtamasan02} for the extension $\tilde{a}$ and use it on $\ol{\OM}$.

\begin{lemma}\cite[Lemma 4.1]{sadiqtamasan02}\label{hproperties}
Assume $a\in C^{p,\alpha}(\ol \OM)$, $p = 1,2$, $\alpha>1/2$, and $h$ defined in \eqref{hDefn}. Then $h \in C^{p,\alpha}(\ol \OM \times \sph)$ and the following hold

(i) $h$ satisfies \begin{align}\label{h=IntegratingFactor}
\tta \cdot \nabla h(z,\tta) = -a(z), \; (z, \tta) \in \OM \times \sph.
\end{align}

(ii) $h$ has vanishing negative Fourier modes yielding the expansions
\begin{align}\label{ehEq}
  e^{- h(z,\tta)} := \sum_{k=0}^{\INF} \alpha_{k}(z) e^{ik\fii}, \quad e^{h(z,\tta)} := \sum_{k=0}^{\INF} \beta_{k}(z) e^{ik\fii}, \; (z, \tta) \in \ol\OM \times \sph,
\end{align}
with

(iii)
\begin{align}
&z\mapsto \langle \alpha_{1}(z), \alpha_{2}(z), \alpha_{3}(z), ... , \rangle \in C^{p,\alpha}(\OM ; l_{1})\cap C(\ol\OM ; l_{1}), \\
&z\mapsto \langle \beta_{1}(z), \beta_{2}(z), \beta_{3}(z), ... , \rangle \in C^{p,\alpha}(\OM ; l_{1})\cap C(\ol\OM ; l_{1}).\label{betasDecay}
\end{align}

(iv) For any $z \in \OM $
\begin{align}\label{betazero}
&\ol{\del} \beta_0(z) = 0, \\ \label{betaone}
& \ol{\del} \beta_1(z) = -a(z) \beta_0(z),\\ \label{betak}
& \ol{\del} \beta_{k+2}(z) +\del \beta_{k}(z) +a(z) \beta_{k+1}(z)=0, \; k \geq 0.
\end{align}

(v) For any $z \in \OM $
\begin{align}\label{alphazero}
&\ol{\del} \alpha_0(z) = 0, \\ \label{alphaone}
& \ol{\del} \alpha_1(z) = a(z) \alpha_0(z),\\ \label{alphak}
& \ol{\del} \alpha_{k+2}(z) +\del \alpha_{k}(z) +a(z) \alpha_{k+1}(z)=0, \; k \geq 0.
\end{align}

(vi) The Fourier modes $\alpha_{k}, \beta_{k}, k\geq 0$ satisfy
\begin{align}\label{alphabetaSys}
\alpha_0 \beta_0 =1, \quad \sum_{m=0}^{k} \alpha_{m}\beta_{k-m}=0, k \geq 1.
\end{align}
\end{lemma}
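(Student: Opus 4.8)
The plan is to establish the six assertions in the order (i), (ii), (iii), then (iv)--(vi), after a preliminary reduction that removes the boundary-vanishing hypothesis. Since $h$ is assembled in \eqref{hDefn} from the half-line transform $Da$ and from full-line (Radon) data of $a$, the natural first move is to extend $a$ to a compactly supported $\tilde a\in C_0^{p,\alpha}(\tilde\OM)$ on a neighbourhood $\tilde\OM\supset\ol\OM$ with $\tilde a|_{\ol\OM}=a$, and to read all transforms off $\tilde a$. On $\ol\OM$ the $z$-derivatives of $h$ that enter the transport identity depend only on $a|_{\ol\OM}$, while the integrability and compact support of $\tilde a$ are what one needs for the mapping properties of $D$, $R$ and the classical Hilbert transform $H$; the regularity claim $h\in C^{p,\alpha}(\ol\OM\times\sph)$ then follows from the boundedness of these operators on the relevant H\"older spaces, exactly as in \cite[Lemma 4.1]{sadiqtamasan02}.

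For (i) I would compute $\tta\cdot\nabla h$ termwise. Differentiating $Da(z,\tta)=\int_0^{\infty} a(z+t\tta)\,dt$ along $\tta$ turns the integrand into $\frac{d}{dt}a(z+t\tta)$, so by the fundamental theorem of calculus and compact support $\tta\cdot\nabla Da(z,\tta)=-a(z)$. The remaining term $\tfrac12(I-iH)Ra(z\cdot\tta^{\perp},\tta)$ depends on $z$ only through $s=z\cdot\tta^{\perp}$, and since $\tta\cdot\nabla(z\cdot\tta^{\perp})=\tta\cdot\tta^{\perp}=0$ the operator $\tta\cdot\nabla$ (taken at fixed $\tta$) annihilates it. Hence $\tta\cdot\nabla h=-a$, which is \eqref{h=IntegratingFactor}.

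The heart of the matter is (ii): that $h(z,\cdot)$ carries only nonnegative Fourier modes in $\fii$. Here I would use the Fourier-slice structure: the one-dimensional Fourier transform of $s\mapsto Ra(s,\tta)$ is $\hat a(\sigma\,\tta^{\perp})$, while the classical Hilbert transform acts as multiplication by $-i\,\mathrm{sgn}(\sigma)$, so $\tfrac12(I-iH)$ projects onto one half-line of frequencies. Tracking how the frequency $\sigma$ in $s=z\cdot\tta^{\perp}$ couples to the angular variable $\fii=\arg\tta$ shows that the negative $\fii$-modes produced by $Da$ are exactly cancelled by those of $\tfrac12(I-iH)Ra$. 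Equivalently, and this is the cleanest route, one invokes the holomorphic-extension argument of Boman--Str\"omberg \cite{bomanStromberg} (see also \cite{finch,naterrerBook}): $h$ extends from $\sph$ to an analytic function of $e^{i\fii}$ in the disk, which is precisely the statement that its negative modes vanish. This yields the expansions \eqref{ehEq}, and I expect this step to be the main obstacle, since it is where the special choice of $h$ is used in an essential way.

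With (ii) in hand, (iii)--(vi) are formal consequences. For (iii), the $C^{p,\alpha}(\ol\OM\times\sph)$ regularity of $e^{\pm h}$ together with Bernstein's theorem, where the hypothesis $\alpha>1/2$ enters, yields $l_1$ summability of the modes $\{\alpha_k\}$, $\{\beta_k\}$ with the stated spatial H\"older regularity. For (iv)--(v) I would apply $\tta\cdot\nabla=e^{-i\fii}\ol\del+e^{i\fii}\del$ to the series in \eqref{ehEq}, use (i) in the form $\tta\cdot\nabla e^{h}=-a\,e^{h}$ and $\tta\cdot\nabla e^{-h}=a\,e^{-h}$, and match the coefficient of each $e^{im\fii}$; the absence of negative modes from (ii) forces $\ol\del\beta_0=\ol\del\alpha_0=0$ from the $m=-1$ coefficient and produces the recursions \eqref{betaone}--\eqref{betak} and \eqref{alphaone}--\eqref{alphak} from the modes $m\ge 0$. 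Finally (vi) follows by multiplying the two expansions in \eqref{ehEq}: using $e^{-h}e^{h}=1$ and (ii) once more, the mode $e^{i0\fii}$ gives $\alpha_0\beta_0=1$ and each positive mode gives $\sum_{m=0}^{k}\alpha_m\beta_{k-m}=0$, which is \eqref{alphabetaSys}.
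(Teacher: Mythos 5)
Your proposal is correct in outline and, as far as the paper goes, follows the same route: the paper offers no proof of this lemma beyond the remark that one extends $a$ to a compactly supported $\tilde a\in C_0^{p,\alpha}(\tilde\OM)$ and then invokes \cite[Lemma 4.1]{sadiqtamasan02}, and your preliminary reduction is exactly that remark, while the rest of your argument (direct differentiation for (i), the Boman--Str\"omberg analytic-extension/Fourier-slice argument for (ii) --- which the paper itself points to --- Bernstein's theorem with $\alpha>1/2$ for (iii), and mode-matching for (iv)--(vi)) is the standard proof that the citation stands in for. One caveat: carrying out your mode-matching for $e^{-h}$ literally yields $\ol{\del}\alpha_{k+2}+\del\alpha_{k}-a\,\alpha_{k+1}=0$, with a minus sign consistent with \eqref{alphaone} (note the sign flip between \eqref{betaone} and \eqref{alphaone}), whereas \eqref{alphak} is displayed with $+a\,\alpha_{k+1}$; your computation gives the correct sign, so do not adjust it to match the displayed formula.
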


From \eqref{h=IntegratingFactor} it is easy to see that $u$ solves \eqref{TransportEq2Tensor2} if and only if $v:=e^{-h}u$ solves
\begin{align}\label{transp_a>0_in_v}
\tta\cdot\nabla v(z,\tta)=\langle F(z) \tta, \tta \rangle e^{-h(z,\tta)}.
\end{align}

If $ u(z,\tta) = \sum_{n =-\INF}^{\INF} u_{n}(z) e^{in\fii} $ solves  \eqref{TransportEq2Tensor2}, then its Fourier modes satisfy
\begin{align} \label{AfzeroEq}
&\ol{\del} u_{1}(z) + \del u_{-1}(z) +a(z)u_{0}(z) = f_{0}(z), \\  \label{Auminus1Eq}
&\ol{\del} u_{0}(z) + \del u_{-2}(z) +a(z)u_{-1}(z) = 0, \\ \label{Af2Eq}
&\ol{\del} u_{-1}(z) + \del u_{-3}(z) +a(z)u_{-2}(z) = f_{2}(z), \\ \label{AttenRT2AnalyticEq}
&\ol{\del} u_{n}(z) + \del u_{n-2}(z) +a(z)u_{n-1}(z) = 0, \quad n \leq -2,
\end{align} where $f_{0}, f_{2}$ as defined in \eqref{f0f2Eq}.

Also, if $v:=e^{-h}u=\sum_{n =-\INF}^{\INF} v_{n}(z) e^{in\fii}$ solves \eqref{transp_a>0_in_v}, then its Fourier modes satisfy
\begin{align}
&\ol{\del} v_{1}(z) + \del v_{-1}(z) = \alpha_0(z){f_0}(z) + \alpha_2(z){f_{2}}(z),\nonumber \\
&\ol{\del} v_{0}(z) + \del v_{-2}(z)= \alpha_1(z){f_2}(z),  \nonumber \\
&\ol{\del} v_{-1}(z) + \del v_{-3}(z)= \alpha_0(z){f_2}(z),  \nonumber \\
&\ol{\del} v_{n}(z) + \del v_{n-2}(z)= 0, \quad n \leq -2,\label{AttenRT2AnalyticEq_v}
\end{align}
where $\alpha_0$, $\alpha_1$ and $\alpha_2$ are the Fourier modes in \eqref{ehEq}, and $f_{0}, f_{2}$ as defined in \eqref{f0f2Eq}.

The following result shows that the equivalence between \eqref{AttenRT2AnalyticEq} and \eqref{AttenRT2AnalyticEq_v} is intrinsic to negative Fourier modes only.
\begin{lemma}\cite[Lemma 4.2]{sadiqtamasan02}\label{intrinsic} Assume $a\in C^{1,\alpha}(\ol \OM), \alpha>1/2$.

(i) Let $\bv= \langle v_{-2}, v_{-3}, ... \rangle \in C^1(\OM, l_1)$ satisfy \eqref{AttenRT2AnalyticEq_v}, and $\bu = \langle u_{-2}, u_{-3}, ... \rangle$ be defined componentwise by the convolution
  \begin{align}\label{UintermsV}
 u_{n} := \sum_{j=0}^{\INF} \beta_j v_{n-j}, \quad n \leq -2,
 \end{align} where $\beta_j$'s are the Fourier modes in \eqref{ehEq}.
 Then $\bu$ solves \eqref{AttenRT2AnalyticEq} in $\OM$.

 (ii) Conversely, let $\bu= \langle u_{-2}, u_{-3}, ... \rangle \in C^1(\OM, l_1)$ satisfy \eqref{AttenRT2AnalyticEq}, and $\bv = \langle v_{-2}, v_{-3}, ... \rangle$ be defined componentwise by the convolution
  \begin{align}\label{VintermsU}
 v_{n} := \sum_{j=0}^{\INF} \alpha_j u_{n-j}, \quad n \leq -2,
 \end{align} where $\alpha_j$'s are the Fourier modes in \eqref{ehEq}.
 Then $\bv$ solves \eqref{AttenRT2AnalyticEq_v} in $\OM$.
\end{lemma}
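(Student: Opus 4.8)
The plan is to prove both directions by directly substituting the convolution \eqref{UintermsV} (resp. \eqref{VintermsU}) into the target system and reducing the claim to the algebraic recurrences that the Fourier modes $\beta_j$ (resp. $\alpha_j$) satisfy in Lemma \ref{hproperties}(iv)--(v). I will carry out part (i) in detail; part (ii) is its mirror image, with the roles of $\alpha$ and $\beta$, and of \eqref{AttenRT2AnalyticEq} and \eqref{AttenRT2AnalyticEq_v}, interchanged.

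For part (i), fix $n\leq -2$ and apply $\ol{\del}$ to $u_n=\sum_{j\geq0}\beta_j v_{n-j}$ and $\del$ to $u_{n-2}=\sum_{j\geq0}\beta_j v_{n-2-j}$, expanding by the product rule termwise. I would then separate the terms in which the derivative falls on a factor $v$ from those in which it falls on a factor $\beta$. Since every index $n-j$ with $j\geq 0$ is $\leq -2$, the source relation \eqref{AttenRT2AnalyticEq_v} applies and gives $\ol{\del}v_{n-j}=-\del v_{n-j-2}$; substituting this shows that the terms $\beta_j\,\ol{\del}v_{n-j}$ coming from $\ol{\del}u_n$ cancel exactly the terms $\beta_j\,\del v_{n-2-j}$ coming from $\del u_{n-2}$, because $n-j-2=n-2-j$. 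Hence
\[
\ol{\del}u_n+\del u_{n-2}+a\,u_{n-1}=\sum_{j\geq0}(\ol{\del}\beta_j)\,v_{n-j}+\sum_{j\geq0}(\del\beta_j)\,v_{n-2-j}+a\sum_{j\geq0}\beta_j\,v_{n-1-j}.
\]

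Next I would collect, for each $k\geq 0$, the coefficient of the surviving factor $v_{n-k}$. This coefficient equals $\ol{\del}\beta_0$ for $k=0$, equals $\ol{\del}\beta_1+a\beta_0$ for $k=1$, and equals $\ol{\del}\beta_k+\del\beta_{k-2}+a\beta_{k-1}$ for $k\geq2$. These vanish by \eqref{betazero}, \eqref{betaone}, and \eqref{betak} (with the index shift $k\mapsto k-2$), respectively, which yields \eqref{AttenRT2AnalyticEq}. Part (ii) follows the identical bookkeeping applied to $u_{n-j}$: one uses \eqref{AttenRT2AnalyticEq} to replace $\ol{\del}u_{n-j}$ and cancel the $\del u$ terms, leaving a residual $\sum_{j\geq0}(\ol{\del}\alpha_j)u_{n-j}+\sum_{j\geq0}(\del\alpha_j)u_{n-2-j}-a\sum_{j\geq0}\alpha_j u_{n-1-j}$, whose coefficients vanish by the companion identities \eqref{alphazero}, \eqref{alphaone}, \eqref{alphak}, establishing \eqref{AttenRT2AnalyticEq_v}.

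The main obstacle is not the algebra but its justification: one must differentiate the convolution series termwise and rearrange the resulting double sums. This is licensed by the summability and regularity supplied by Lemma \ref{hproperties}(iii)---namely that $\langle\beta_1,\beta_2,\dots\rangle$ and $\langle\alpha_1,\alpha_2,\dots\rangle$ lie in $C^{1,\alpha}(\OM;l_1)$---together with the standing hypotheses $\bv,\bu\in C^1(\OM;l_1)$. These ensure that $\sum_j\beta_j v_{n-j}$ together with its formal $\ol{\del}$- and $\del$-derivatives converge absolutely and locally uniformly in $\OM$, so that $u_n\in C^1(\OM)$ and the interchange of $\sum_j$ with $\ol{\del}$ and $\del$ is valid; the same decay guarantees that the reindexed sums above are absolutely convergent, justifying the regrouping by the single index $k$.
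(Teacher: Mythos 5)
Your argument is correct, and it fills a gap the paper deliberately leaves open: Lemma \ref{intrinsic} is quoted from \cite[Lemma 4.2]{sadiqtamasan02} with no computation given here, only the remark that the hypothesis that $a$ vanish on $\Gam$ can be removed by extending $a$ to a compactly supported $\tilde{a}\in C_0^{1,\alpha}(\tilde{\OM})$ on a neighbourhood $\tilde{\OM}\supset\ol{\OM}$. Your direct substitution, the exact cancellation of the terms in which the derivative falls on $v$ (resp.\ $u$) via the source system \eqref{AttenRT2AnalyticEq_v} (resp.\ \eqref{AttenRT2AnalyticEq}), and the regrouping by the coefficient of $v_{n-k}$ (resp.\ $u_{n-k}$) reduced to the recurrences of Lemma \ref{hproperties}(iv)--(v) is the natural self-contained proof of this purely algebraic statement; the justification of termwise differentiation and reindexing via $l_1\ast l_1\subset l_1$ together with the $C^1(\OM;l_1)$ and $C^{1,\alpha}(\OM;l_1)$ hypotheses is adequate (note that $\beta_0$ and $\alpha_0$ are single bounded functions, so prepending them to the sequences from Lemma \ref{hproperties}(iii) is harmless).

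One point deserves explicit attention in part (ii). Your residual coefficient of $u_{n-k}$ for $k\ge 2$ is $\ol{\del}\alpha_k+\del\alpha_{k-2}-a\,\alpha_{k-1}$, with a minus sign on the attenuation term, and you assert it vanishes by \eqref{alphak}; but \eqref{alphak} as printed reads $\ol{\del}\alpha_{k+2}+\del\alpha_k+a\,\alpha_{k+1}=0$. The identity you actually need is $\ol{\del}\alpha_{k+2}+\del\alpha_k-a\,\alpha_{k+1}=0$, and that is the correct one: since $\tta\cdot\nabla h=-a$ gives $\tta\cdot\nabla e^{-h}=a\,e^{-h}$, matching Fourier modes yields $\ol{\del}\alpha_{m+1}+\del\alpha_{m-1}=a\,\alpha_m$, which is consistent with \eqref{alphaone} (namely $\ol{\del}\alpha_1=a\alpha_0$) and carries the sign opposite to the one in \eqref{betak} for the modes of $e^{h}$, exactly as one expects from $h\mapsto -h$. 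So \eqref{alphak} contains a sign typo in this restatement of \cite[Lemma 4.1]{sadiqtamasan02}, your computation is the correct one, and you should state the corrected identity you are invoking rather than citing \eqref{alphak} verbatim; with that caveat the proof is complete.
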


The operators $\del,\, \ol{\del}$ in \eqref{CauchyRiemannOp} can be rewritten in terms of the derivative in tangential direction  $\del_{\tau}$ and derivative in normal direction $\del_n$,
\begin{align*}
&\del_n = \cos \eta \del_{x_1} +\sin \eta \del_{x_2}, \\
&\del_\tau = -\sin \eta \del_{x_1} +\cos \eta \del_{x_2},
\end{align*} where $\eta$ is the angle made by the normal to the boundary with $x_1$ direction (Since the boundary $\Gam$ is known, $\eta$ is a known function on the boundary).
In these coordinates
\begin{align}\label{del_interms_delR_ETA}
\del = \frac{e^{-i \eta}}{2} (\del_n -i \del_\tau), \quad \ol{\del} = \frac{e^{i \eta}}{2} (\del_n +i \del_\tau).
\end{align}

Next we characterize the attenuated $X$-ray data $g$ in terms of 
its Fourier modes $g_{0},  g_{-1}$ and the negative index modes $\gamma_{-2}, \gamma_{-3}, \gamma_{-4} ...$ of
\begin{align}\label{FourierData_a>0}
e^{- h(\zeta,\tta)} g(\zeta,\theta) = \sum_{k=-\INF}^{\INF} \gamma_{k}(\zeta) e^{ik\fii}, \quad \zeta \in \Gam.
\end{align}
To simplify the statement, let
\begin{align}\label{g_h}
\bg_h:=\langle \gamma_{-2}, \gamma_{-3}, \gamma_{-4} ... \rangle,
\end{align}
and from the negative even, respectively, negative odd Fourier modes, we built the sequences
\begin{align}\label{gHEvenOdd}
\bg_h^{even} = \langle \gamma_{-2},\gamma_{-4},...\rangle, \quad \text{and} \quad
\bg_h^{odd} = \langle \gamma_{-3},\gamma_{-5},...\rangle.
\end{align} Note that $\gamma_{-1}$ is not included  in the $\bg_h^{odd}$ definition.
As before we construct simultaneously the right hand side of the transport equation \eqref{TransportEq2Tensor2} together with the solution $u$. Construction of $u$ is via its Fourier  modes. We first construct the negative modes and then the positive modes are constructed by conjugation.  Apart from zeroth mode $u_0$ and negative one mode $u_{-1}$, all Fourier modes are constructed uniquely from the data $\bg_h^{even}$, $\bg_h^{odd}$.
The mode $u_{0}$ will be chosen arbitrarily from the class $\Psi_{g}^{a}$ with prescribed trace and gradient on the boundary $\Gam$ defined as
\begin{align}\label{PsiClass}
\Psi_{g}^{a}:=
&\left \{ \vphantom{\int} 
\psi \in C^{2}(\ol\OM ;\BR):\psi \lvert_{\Gam}= g_{0},
\right.  \\ \nonumber
&\left. \vphantom{\int}
  \del_n \psi  \lvert_{\Gam}= -2 \re  e^{-i\eta} \left ( \left. \del \sum_{j=0}^{\INF} \beta_j  (\B \bg_h )_{-2-j} \right \lvert_{\Gam} +  a \lvert_{\Gam} \,  g_{-1} \right )  \right \},
\end{align}
 where $\B$ be the Bukhgeim-Cauchy operator in  \eqref{BukhgeimCauchyFormula}, $\beta_{j}$'s are the Fourier modes in \eqref{ehEq} and $\bg_h$ in \eqref{g_h}.
The mode $u_{-1}$ is define in terms of $u_0$, see \eqref{DefnUminus1}. 

Recall the Hilbert transform $\HT$ in \eqref{hilbertT}.
\begin{theorem}[Range characterization in the attenuated case]\label{ART2Tensor}

Let $a \in C^{2,\alpha}(\ol\OM)$, $\alpha>1/2$ with $\underset{\ol{\OM}}{\min}\, a >0$.

(i) Let $\bF \in C_{0}^{1,\alpha}(\OM;\BR^{2 \times 2})$. For $g := \left \{ \begin{array}{ll}
X_{a} \bF(x,\tta) , & (x,\tta) \in \Gam_{+}, \\
0 , &  (x,\tta) \in \Gam_{-} \cup \Gam_{0}, \\
\end{array}
  \right. $ consider the corresponding sequences $\bg_h^{even}, \bg_h^{odd}$ as in \eqref{gHEvenOdd}.
 Then $\bg_h^{even}, \bg_h^{odd} \in l^{1,1}_{\INF}(\Gamma)\cap C^\alpha(\Gamma;l_1)$ satisfy
\begin{align}\label{ART2TensorCond1}
&[I+i\HT] \bg_h^{even} =0, \quad [I+i\HT] \bg_h^{odd} =0, \quad \text{and} \\ \label{ART2TensorCond2}
&\del_{\tau} g_0  = -2 \im  e^{-i\eta} \left ( \left. \del \sum_{j=0}^{\INF} \beta_j  (\B \bg_h )_{-2-j} \right \lvert_{\Gam} +  a \lvert_{\Gam} \,  g_{-1} \right ) ,
\end{align} where $\HT$ is the Hilbert transform in \eqref{hilbertT}, $\B$ is the Bukhgeim-Cauchy operator in \eqref{BukhgeimCauchyFormula}, $\beta_{j}$'s are the Fourier modes in \eqref{ehEq} and $\bg_h$ in \eqref{g_h}.

(ii) Let $g\in C^{\alpha} \left(\Gam; C^{1,\alpha}(\sph) \right)\cap C(\Gam;C^{2,\alpha}(\sph))$ be real valued with $g \lvert_{\Gam_{-} \cup \Gam_{0}}=0$.
If the corresponding sequences $\bg_h^{even}, \bg_h^{odd}\in Y_{\alpha}$ satisfying \eqref{ART2TensorCond1} and \eqref{ART2TensorCond2}  then there exists a symmetric 2-tensor  $\bF \in C(\OM;\BR^{2 \times 2})$,
such that $g \lvert_{\Gam_{+}}= X_{a}\bF$. Moreover for each $\psi \in \Psi_{g}^{a}$ in \eqref{PsiClass}, there is a unique real valued symmetric 2-tensor $\bF_{\psi}$ such that
$g\lvert_{\Gam_{+}} = X_{a}\bF_{\psi}$.
\end{theorem}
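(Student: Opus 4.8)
The plan is to mirror the structure of the non-attenuated Theorem \ref{RT2Tensor}, with the reduction $v=e^{-h}u$ playing the central role so that the $\mathcal{L}$-analytic machinery (Theorem \ref{BukhgeimCauchyThm}, Theorem \ref{NecSuf}, Proposition \ref{extraRegularity}) applies directly to the transformed, source-free negative modes. For the \textbf{necessity} part (i), I would argue that when $\bF\in C_0^{1,\alpha}(\OM;\BR^{2\times2})$, the data $g$ vanishes near $\Gam_0$ and is the trace of a solution $u\in C^{1,\alpha}(\ol\OM\times\sph)$ of \eqref{TransportEq2Tensor2}. Setting $v=e^{-h}u$, its negative modes $v_{-n}$, $n\ge 2$, satisfy the source-free system \eqref{AttenRT2AnalyticEq_v}, so that $\langle v_{-2},v_{-4},\dots\rangle$ and $\langle v_{-3},v_{-5},\dots\rangle$ are each $\mathcal{L}$-analytic. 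Since the boundary modes of $v$ are precisely the $\gamma_{k}$ in \eqref{FourierData_a>0}, the necessity direction of Theorem \ref{NecSuf} applied to $\bg_h^{even}$ and $\bg_h^{odd}$ yields \eqref{ART2TensorCond1}. The extra scalar constraint \eqref{ART2TensorCond2} should come from isolating the zeroth-mode equation \eqref{Auminus1Eq} (equivalently the $v_0$ equation), rewriting $\del,\ol\del$ in the tangential/normal frame via \eqref{del_interms_delR_ETA}, and taking the imaginary (tangential) part; the trace of $u_{-2}$ is reconstructed from $\bg_h$ through the convolution \eqref{UintermsV}, which is why $(\B\bg_h)_{-2-j}$ and the $\beta_j$ appear.

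For the \textbf{sufficiency} part (ii) I would reverse this construction step by step, exactly paralleling the six-step scheme in the proof of Theorem \ref{RT2Tensor}. First, use the Bukhgeim-Cauchy formula \eqref{BukhgeimCauchyFormula} to build the $v$-modes $\langle v_{-2},v_{-4},\dots\rangle:=\B\bg_h^{even}$ and $\langle v_{-3},v_{-5},\dots\rangle:=\B\bg_h^{odd}$; by Theorem \ref{BukhgeimCauchyThm} these are $\mathcal{L}$-analytic, and hypothesis \eqref{ART2TensorCond1} together with the sufficiency direction of Theorem \ref{NecSuf} guarantees they attain the prescribed traces $\gamma_{-2k}$, $\gamma_{-(2k+1)}$ on $\Gam$. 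Since $\bg_h^{even},\bg_h^{odd}\in Y_\alpha$, Proposition \ref{extraRegularity} gives the regularity needed to differentiate. Next, convert back to the attenuated modes $u_{-n}$ for $n\ge 2$ using Lemma \ref{intrinsic}(i) via the convolution \eqref{UintermsV} with the $\beta_j$'s, so that these $u_{-n}$ automatically solve \eqref{AttenRT2AnalyticEq}. The mode $u_0$ is then chosen freely from $\Psi_g^a$ in \eqref{PsiClass}, and $u_{-1}$ is defined from $u_0$ by the relation \eqref{DefnUminus1} (which is the solvability of \eqref{Auminus1Eq} for $u_{-1}$ given $u_0$ and $u_{-2}$); the boundary conditions built into $\Psi_g^a$ and the scalar constraint \eqref{ART2TensorCond2} are precisely what make $u_{-1}|_\Gam=g_{-1}$ and $u_0|_\Gam=g_0$ consistent. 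Positive modes are obtained by conjugation $u_{n}=\ol{u_{-n}}$, and the tensor is read off as $f_0=2\re(\del\psi)+\cdots$, $f_2=\ol\del u_{-1}+\del u_{-3}+a\,u_{-2}$ from \eqref{AfzeroEq} and \eqref{Af2Eq}, matching the pattern of \eqref{f0_f2_defn_a=0} but now with the attenuation terms retained.

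To finish, I would assemble $u(z,\tta)=\sum_n u_n(z)e^{in\fii}$, invoke the regularity results of \cite{sadiqtamasan01} (as in the non-attenuated proof) to place $u\in C^{1,\alpha}(\OM\times\sph)\cap C^\alpha(\ol\OM\times\sph)$, verify by the same Fourier-mode bookkeeping that $u|_{\Gam\times\sph}=g$, and then compute $\tta\cdot\nabla u+au$ mode by mode. The middle modes collapse by \eqref{AttenRT2AnalyticEq} and its conjugate, leaving exactly $f_0+\ol{f_2}e^{2i\fii}+f_2e^{-2i\fii}=\langle\bF_\psi\tta,\tta\rangle$, which is \eqref{TransportEq2Tensor2}; since $u|_{\Gam_-}=0$ and the source is what it is, \eqref{u_Gam+} gives $g|_{\Gam_+}=X_a\bF_\psi$.

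I expect the main obstacle to be the careful handling of the two coupled modes $u_0$ and $u_{-1}$ through equation \eqref{Auminus1Eq}, and in particular deriving the scalar tangential constraint \eqref{ART2TensorCond2} and showing it is exactly the compatibility condition that allows $u_{-1}$ to be recovered from $u_0\in\Psi_g^a$ with the correct boundary trace $g_{-1}$. Unlike the non-attenuated case, where $u_0$ sits in an $\mathcal{L}$-analytic block and $u_{-1}$ is a free $\psi$, here the attenuation couples $u_0$ to the reconstructed $u_{-2}$, so one must verify both that the normal-derivative condition in \eqref{PsiClass} is consistent with \eqref{AfzeroEq}/\eqref{Auminus1Eq} and that \eqref{ART2TensorCond2} is simultaneously necessary in part (i) and sufficient in part (ii); reconciling these through the frame identities \eqref{del_interms_delR_ETA} and the convolution \eqref{UintermsV} is where the delicate computation lies. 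The non-uniqueness parametrized by $\Psi_g^a$ should then follow since different admissible $\psi$ differ only in the unconstrained interior part of $u_0$, producing distinct but $X_a$-indistinguishable tensors $\bF_\psi$.
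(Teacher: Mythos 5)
Your proposal follows essentially the same route as the paper's proof: reduction via $v=e^{-h}u$ so that the negative even/odd $v$-modes form $\mathcal{L}$-analytic sequences handled by Theorems \ref{BukhgeimCauchyThm} and \ref{NecSuf}, the convolution with the $\beta_j$'s (Lemma \ref{intrinsic}) to return to the $u$-modes, the free choice $u_0=\psi\in\Psi_g^a$ with $u_{-1}$ recovered from \eqref{Auminus1Eq}, the tangential/normal splitting \eqref{del_interms_delR_ETA} producing \eqref{ART2TensorCond2} as the imaginary part of the boundary restriction of \eqref{Auminus1Eq}, and the final mode-by-mode verification of the transport equation. The steps, key lemmas, and the identification of where the delicate point lies (the coupling of $u_0$, $u_{-1}$, $u_{-2}$ and the role of \eqref{ART2TensorCond2} in making $u_{-1}|_\Gam=g_{-1}$) all match the paper's argument.
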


\begin{proof}
(i) {\bf Necessity}

Let $\bF \in C_{0}^{1,\alpha}(\OM;\BR^{2 \times 2})$. Since $\bF$ is compactly supported inside \
$\OM$, for any point at the boundary there is a cone of lines which do not meet the support.
Thus $g \equiv 0$ in the neighborhood of the variety $\Gam_0$
 which yields $g \in C^{1,\alpha}(\Gam \times \sph)$. Moreover, $g$ is the trace on $\Gam \times \sph$ of a solution $u \in C^{1,\alpha}(\ol{\OM} \times \sph)$.
By \cite[Proposition 4.1]{sadiqtamasan01} $\bg_{h}^{even}, \bg_{h}^{odd} \in l^{1,1}_{\INF}(\Gamma)\cap C^\alpha(\Gamma;l_1)$. 

Let $v:=e^{-h}u=\sum_{n =-\INF}^{\INF} v_{n}(z) e^{in\fii}$, then the negative Fourier modes of $v$ satisfy \eqref{AttenRT2AnalyticEq_v}. In particular its negative odd subsequence $\langle v_{-3},v_{-5},...\rangle$ and
negative even subsequence $ \langle v_{-2},v_{-4},...\rangle$ are $\mathcal{L}$-analytic with traces $\bg_{h}^{odd}$ respectively $\bg_{h}^{even}$.
The necessity part of Theorem \ref{NecSuf} yields \eqref{ART2TensorCond1}:
\begin{align*}
[I+i\HT]\bg_{h}^{odd}=0,\quad [I+i\HT]\bg_{h}^{even}=0.
\end{align*}

If $u$ solves \eqref{TransportEq2Tensor2}, then its Fourier modes satisfy \eqref{AfzeroEq}, \eqref{Auminus1Eq}, \eqref{Af2Eq}, and \eqref{AttenRT2AnalyticEq}. 
The negative Fourier modes of $u$ and $v$ are related by 
\begin{align}\label{Un_Vn}
u_{n} = \sum_{j=0}^{\INF} \beta_j v_{n-j}, \quad n \leq 0,
\end{align} where $\beta_{j}$'s are the Fourier modes in \eqref{ehEq}.
The restriction of \eqref{Auminus1Eq} to the boundary yields
\begin{align*}
\ol{\del} u_{0} \lvert_{\Gam} &= - \del u_{-2} \lvert_{\Gam} - (a u_{-1}) \lvert_{\Gam}.
\end{align*}
Expressing $\ol{\del}$ in the above equation in terms of $\del_{\tau}$ and $\del_n$ as in \eqref{del_interms_delR_ETA} yields  
\begin{align*}
\frac{e^{i \eta}}{2} (\del_n +i \del_\tau) u_{0} \lvert_{\Gam}  &= -\del u_{-2} \lvert_{\Gam} - a\lvert_{\Gam} g_{-1}.
\end{align*}
Simplifying the above expression and using $\del_\tau u_{0} \lvert_{\Gam} = \del_\tau g_{0}$, yields
\begin{align*}
 \del_n u_{0} \lvert_{\Gam} +i \del_\tau g_{0} &= -2e^{-i \eta}\left ( \del u_{-2} \lvert_{\Gam} + a \lvert_{\Gam} g_{-1} \right ).
\end{align*}
The imaginary part of the above equation yields \eqref{ART2TensorCond2}. This proves part (i) of the theorem.

(ii) {\bf Sufficiency}

To prove the sufficiency we will construct a real valued symmetric 2-tensor $\bF$ in $\OM$ and a real valued function $u \in C^{1}(\OM\times \sph)\cap C(\ol\OM \times \sph)$
such that $u \lvert_{\Gam \times \sph}=g$ and $u$ solves \eqref{TransportEq2Tensor2} in $\OM$.
The construction of such $u$ is in terms of its Fourier modes in the angular variable and it is done in several steps.

{\bf Step 1: The construction of negative modes $u_{n}$ for $n \leq -2$.}

Let $g\in C^{\alpha} \left(\Gam; C^{1,\alpha}(\sph) \right)\cap C(\Gam;C^{2,\alpha}(\sph))$ be real valued
with $g \lvert_{\Gam_{-} \cup \Gam_{0}}=0$.
Let the corresponding sequences $\bg_h^{even}, \bg_h^{odd}$ as in \eqref{gHEvenOdd} satisfying \eqref{ART2TensorCond1} and \eqref{ART2TensorCond2}.
By \cite[Proposition 4.1(ii)]{sadiqtamasan01} and \cite[Proposition 5.2(iii)]{sadiqtamasan01} $\bg_h^{even}, \bg_h^{odd}\in Y_{\alpha}$.
Use the Bukhgeim-Cauchy Integral formula \eqref{BukhgeimCauchyFormula} to define the $\mathcal{L}$-analytic maps
\begin{align}
&\bv^{even}(z)= \langle v_{-2}(z), v_{-4}(z), ... \rangle:= \B \bg_h^{even}(z), \quad z\in \OM,\\
&\bv^{odd}(z)= \langle v_{-3}(z), v_{-5}(z), ... \rangle:= \B \bg_h^{odd}(z),\quad z\in \OM.
\end{align}By intertwining let also define
\begin{align*}
\bv(z):=\langle v_{-2}(z),v_{-3}(z), ... \rangle , \quad z\in \OM.
\end{align*}
By Proposition \ref{extraRegularity}
\begin{align}\label{smothness__v_j}
\bv^{even}, \bv^{odd},\bv\in C^{1,\alpha}(\OM; l_{1})\cap C^{\alpha}(\ol\OM;l_1)\cap C^2(\OM;l_\infty).
 \end{align}Moreover, since $\bg_h^{even}, \bg_h^{odd}$ satisfy the hypothesis \eqref{ART2TensorCond1}, by Theorem \ref{NecSuf} we have
\begin{align*}
\bv^{even} \lvert_{\Gam} = \bg_h^{even} \quad \text{and}\quad  \bv^{odd}\lvert_{\Gam} = \bg_h^{odd}.
\end{align*} In particular
\begin{align}\label{vn_intermsof_gn}
v_{n} \lvert_{\Gam} = \sum_{k=0}^{\INF} \left(\alpha_{k}\lvert_\Gam\right) g_{n-k}, \quad n \leq -2.
\end{align}

For each $n \leq -2$, we use the convolution formula below to construct
 \begin{align}\label{DefnUnConv}
 u_{n} := \sum_{j=0}^{\INF} \beta_j v_{n-j}.
 \end{align}
Since $a\in C^{2,\alpha}(\ol\OM)$, by \eqref{betasDecay}, the sequence
 $z \mapsto \langle \beta_0(z), \beta_1(z), \beta_2(z),...\rangle$ is in $C^{2,\alpha}(\OM; l_{1}) \cap C^{\alpha}(\ol\OM ; l_1)$. Since convolution preserves $l_1$, the map is in
 \begin{align}\label{regularityofU}
 z\mapsto \langle u_{-2}(z), u_{-3}(z),...\rangle  \in C^{1,\alpha}(\OM; l_{1}) \cap C^{\alpha}(\ol\OM ; l_1).
 \end{align} Moreover, since $\bv\in C^2(\OM;l_\infty)$ as in \eqref{smothness__v_j}, we also conclude from convolution that
 \begin{align}\label{u_minus2_reg}
 z\mapsto \langle  u_{-2}(z), u_{-3}(z),...\rangle  \in C^{2}(\OM; l_\infty).
 \end{align}
The property \eqref{regularityofU} justifies the calculation of traces $u_n|_\Gam$
for each $n \leq -2$:
\begin{align*}
u_{n} \lvert_{\Gam} &= \sum_{j=0}^{\INF} \beta_j \lvert_{\Gam} \left( v_{n-j} \lvert_{\Gam} \right).
\end{align*}
Using \eqref{vn_intermsof_gn} in the above equation gives
\begin{align*}
u_{n} \lvert_{\Gam} &=\sum_{j=0}^{\INF} \beta_j \lvert_{\Gam} \sum_{k=0}^{\INF} \alpha_{k} \lvert_{\Gam} g_{n-j-k}.
\end{align*}
A change of index $m=j+k$, simplifies the above equation
\begin{align*}
u_{n} \lvert_{\Gam} &= \sum_{m=0}^{\INF} \sum_{k=0}^{m}  \alpha_{k} \beta_{m-k} g_{n-m}, \\
&= \alpha_{0}\beta_{0}g_{n} +\sum_{m=1}^{\INF} \sum_{k=0}^{m}  \alpha_{k} \beta_{m-k} g_{n-m}.
\end{align*}
Using Lemma \ref{hproperties} (vi) yields
\begin{align}\label{Trace_un_neg_gn}
u_{n} \lvert_{\Gam} &= g_{n}, \quad n \leq -2.
\end{align}

From the Lemma \ref{intrinsic}, the constructed $u_{n}$ in \eqref{DefnUnConv} satisfy
\begin{align}\label{uL2sys_a>0}
\ol{\del} u_{n} + \del u_{n-2} +a u_{n-1} = 0, \quad n\leq-2.
\end{align}

{\bf Step 2: The construction of positive modes $u_{n}$ for $n \geq 2$.}

All of the positive Fourier modes are constructed by conjugation:
\begin{align}\label{construct_pos_Fmodes}
u_{n}:=\ol{u_{-n}},\quad n\geq 2.
\end{align} Moreover using \eqref{Trace_un_neg_gn}, the traces $u_n|_\Gam$ for each $n \geq 2$:
\begin{align}\label{Trace_un_pos_gn}
u_{n}\lvert_{\Gam} = \ol{u_{-n}}\lvert_{\Gam}= \ol{g_{-n}}= g_{n}, \quad n \geq 2.
\end{align}

By conjugating \eqref{uL2sys_a>0} we note that the positive Fourier modes also satisfy
\begin{align}\label{uL2sys_a>0_Pos}
\ol{\del} u_{n+2} + \del u_{n} +a u_{n+1} = 0, \quad n\geq2.
\end{align}

{\bf Step 3: The construction of modes $u_{0}, u_{-1}$ and $u_{1}$.}

Let $\psi \in \Psi_{g}^{a}$ as in \eqref{PsiClass} and define
\begin{align}\label{DefnU0}
u_{0}:= \psi,
\end{align} and 
\begin{align}\label{DefnUminus1}
u_{-1}:= \frac{-\ol{\del}\psi-\del u_{-2}}{a}, \quad u_{1} := \ol{u_{-1}}.
\end{align}
By the construction $u_0 \in C^{2}(\OM; l_{\INF})$ and $u_{-1}\in C^{1}(\OM; l_{\INF})$, and
\begin{align}\label{d_u0_uminus1eq}
\ol{\del}u_0 +\del u_{-2}+au_{-1} =0
\end{align} is satisfied. Furthermore, by conjugating \eqref{d_u0_uminus1eq} yields
\begin{align}\label{d_u0_u1eq}
\del u_0 +\ol{\del} u_{2}+au_{1} =0.
\end{align}

Since $\psi \in \Psi_{g}^{a}$, the trace of $u_{0}$ satisfies
\begin{align}\label{TraceU0}
u_{0} \lvert_{\Gam} = g_0.
\end{align} 
We check next that the trace of $u_{-1}$ is $g_{-1}$:
\begin{align}\nonumber
u_{-1} \lvert_{\Gam} &= \left. \frac{-\ol{\del}\psi-\del u_{-2}}{a} \right \lvert_{\Gam} \\ \nonumber
 &=  - \left. \frac{1}{a} \right \lvert_{\Gam} \frac{e^{i \eta}}{2} (\del_n +i \del_\tau) \psi \lvert_{\Gam}  - \left. \frac{1}{a} \right \lvert_{\Gam}
\del u_{-2} \lvert_{\Gam}  \\ \nonumber
&= -\left. \frac{1}{2a} \right \lvert_{\Gam} e^{i \eta}
\left \{ 
\del_n \psi \lvert_{\Gam} +i \del_\tau \psi \lvert_{\Gam} + 2e^{-i \eta} \del u_{-2} \lvert_{\Gam} 
\right \} \\ \label{TraceUminus1}
&= g_{-1},
\end{align}
where the last equality uses  \eqref{ART2TensorCond2} and the condition in class \eqref{PsiClass}.

{\bf Step 4: The construction of the tensor field $\bF_{\psi} $ whose attenuated $X$-ray data is $g$.}

We define the 2-tensor
\begin{align}\label{DefnFpsi}
\bF_{\psi} := \begin{pmatrix} f_{0} + 2\re f_2  & 2 \im f_2 \\ 2 \im f_2 & f_{0} - 2\re f_2 \end{pmatrix},
\end{align} where
\begin{align}\label{f0_defn_a>0}
f_{0} &= -2 \re \left( \frac{\ol{\del}\psi+\del u_{-2}}{a} \right) +a \psi, \; \text{and} \\ \label{f2_defn_a>0}
f_{2} &= -\ol{\del} \left( \frac{\ol{\del}\psi+\del u_{-2}}{a} \right)+\del u_{-3} + a u_{-2}.
\end{align}
Note that $f_2$ is well defined as $u_{-2}\in C^{2}(\OM;l_{\INF})$ from \eqref{u_minus2_reg}.

In order to show $g \lvert_{\Gam_{+}} = X_{a} \bF_{\psi}$ with $\bF_{\psi}$ as in \eqref{DefnFpsi}, we define the real valued function $u$ via its Fourier modes
\begin{align}\label{definitionU_a>0}
u(z, \tta)&:=u_{0}(z)+ u_{-1}e^{-i \fii} + \ol{u_{-1}}(z)e^{i \fii} \\ \nonumber
&\qquad + \sum_{n = 2}^{\INF} u_{-n}(z)e^{-i n\fii} + \sum_{n = 2}^{\INF} u_{n}(z)e^{i n\fii}.
\end{align} We check below that $u$ is well defined, has the trace $g$ on $\Gam$ and satisfies the transport equation \eqref{TransportEq2Tensor2}.

For convenience consider the intertwining sequence
\begin{align*}
\bu(z):=\langle u_{0}(z),u_{-1}(z),u_{-2}(z),u_{-3}(z), ... \rangle , \quad z\in \OM.
\end{align*} Since $\bu \in C^{1,\alpha}(\OM;l_{1}) \cap C^{\alpha}(\ol{\OM};l_{1})$, by \cite[Proposition 4.1 (iii)]{sadiqtamasan01} we conclude that $u$ is well defined by \eqref{definitionU_a>0} and as a function in $C^{1,\alpha}(\OM \times \sph)\cap C^{\alpha}(\ol{\OM}\times \sph)$. In particular $u(\cdot,\tta)$ for $\tta= (\cos\fii,\sin\fii )$ extends to the boundary and its trace satisfies
\begin{align*}
u(\cdot,\tta)\lvert_{\Gam} &=\left.\left ( u_{0}+ u_{-1} e^{-i \fii} + \ol{u_{-1}}e^{i \fii} + \sum_{n = 2}^{\INF} u_{-n}e^{-i n\fii} + \sum_{n = 2}^{\INF} u_{n}e^{i n\fii}\right) \right\lvert_{\Gam}\\
&=u_{0}\lvert_{\Gam}+ u_{-1}\lvert_{\Gam}e^{-i \fii} + \ol{u_{-1}}\lvert_{\Gam}e^{i \fii} + \sum_{n = 2}^{\INF} (u_{-n}\lvert_{\Gam}) e^{-i n\fii} + \sum_{n = 2}^{\INF} (u_{n}\lvert_{\Gam})e^{i n\fii}\\
&=g_{0}+ g_{-1}e^{-i \fii} +g_{1}e^{i \fii}+\sum_{n = 2}^{\INF} g_{-n}e^{-i n\fii} + \sum_{n = 2}^{\INF} g_{n}e^{i n\fii} \\
&= g(\cdot,\tta),
\end{align*} where is the third equality we have used
\eqref{Trace_un_neg_gn}, \eqref{Trace_un_pos_gn}, \eqref{TraceU0}, and \eqref{TraceUminus1}.

Since $u\in C^{1,\alpha}(\OM \times \sph)\cap C^{\alpha}(\ol{\OM}\times \sph)$, the following calculation is also justified: 
\begin{align*}
\tta \cdot \nabla u +au &= e^{-i \fii} \ol{\del}u_0 + e^{i \fii} \del u_0
+e^{-2i \fii} \ol{\del}u_{-1} + \ol{\del} u_{1} + \del u_{-1} +e^{2i \fii} \del u_{1}  \\
& \qquad + \sum_{n = 2}^{\INF} \ol{\del}u_{-n}e^{-i (n+1) \fii} + \sum_{n = 2}^{\INF} \del u_{-n}e^{-i (n-1) \fii} \\
& \qquad + \sum_{n = 2}^{\INF} \ol{\del}u_{n}e^{i (n-1) \fii} + \sum_{n = 2}^{\INF} \del u_{n}e^{i (n+1) \fii} \\
& \qquad + au_{0}+ au_{-1} e^{-i \fii} +a u_{1}e^{i \fii} + \sum_{n = 2}^{\INF} au_{-n}e^{-i n\fii} + \sum_{n = 2}^{\INF} au_{n}e^{i n\fii}.
\end{align*}
Rearranging the modes in the above equation yields
\begin{align*}
\tta \cdot \nabla u +au&= e^{-2i \fii} (\ol{\del}u_{-1}+\del u_{-3}+au_{-2})  + e^{2i \fii} (\del u_{1} + \ol{\del}u_{3}+au_{2}) \\
& \qquad + e^{-i \fii} (\ol{\del}u_0 +\del u_{-2}+au_{-1}) + e^{i \fii} (\del u_0 +\ol{\del}u_2 +a u_{1})  \\
& \qquad + \ol{\del } u_{1}+ \del u_{-1} +au_0
+ \sum_{n = 2}^{\INF} ( \ol{\del}u_{n+2}+\del u_{n}+au_{n+1}) e^{i (n+1) \fii} \\
& \qquad
+\sum_{n = 2}^{\INF} (\ol{\del}u_{-n}+ \del u_{-n-2}+au_{-n-1})e^{-i (n+1) \fii} .
\end{align*}
Using \eqref{uL2sys_a>0}, \eqref{uL2sys_a>0_Pos}, \eqref{d_u0_uminus1eq} and \eqref{d_u0_u1eq} simplifies the above equation
\begin{align*}
\tta \cdot \nabla u +au&= e^{-2i \fii} (\ol{\del}u_{-1}+\del u_{-3}+au_{-2})  + e^{2i \fii} (\del u_{1} + \ol{\del}u_{3}+au_{2}) \\
& \qquad + \ol{\del } u_{1}+ \del u_{-1} +au_0.
\end{align*}
Now using \eqref{f0_defn_a>0} and \eqref{f2_defn_a>0}, we conclude \eqref{TransportEq2Tensor2}
\begin{align*}
\tta \cdot \nabla u +au &= e^{-2i \fii} f_2 + e^{2i \fii} \ol{f_2}+f_0 = \langle \bF_{\psi} \tta,  \tta \rangle.
\end{align*}

\end{proof}

\section*{Acknowledgment}
The work of O. Scherzer has been supported by the Austrian Science Fund (FWF), Project P26687-N25 
(Interdisciplinary Coupled Physics Imaging). The work of A. Tamasan has been supported by the NSF-Grant DMS 1312883.


\end{document}